\newcommand{\bck}{\backslash}
\newenvironment{proof}{\noindent{\bf Proof.}}{$~\hfill\Box$\medbreak}
\DeclareMathOperator{\con}{conv}
\newtheorem{theorem}{Theorem}[section]
\newtheorem{corollary}[theorem]{Corollary}
\newtheorem{definition}[theorem]{Definition}
\newtheorem{lemma}[theorem]{Lemma}
\begin{document}

\begin{center}
{\huge \bf Generalized vector equilibrium problems with pairs of bifunctions and some applications}
 \center{\it by}
 
\center{\it Bui The Hung}
\center{\it Department of Mathematics, Thai Nguyen University of Education\\ e-mail: hungbt@tnue.edu.vn}\\

\end{center}

\date{}

\bigskip

{\bf Abstract.} In this paper, we deal with the following generalized vector equilibrium problem: Let $X, Y$ be topological vector spaces over reals, $D$ be a nonempty subset of $X$, $K$ be a nonempty set and $\theta$ be origin of $Y$. Given multi-valued mapping $F: D\times K\rightrightarrows Y$, can be formulated as the problem, find $\bar x\in D$ such that 
$$\mbox{GVEP}(F, D, K)\,\,\,\,\,\,\theta\in F(\bar x, y)\ \mbox{for all}\ y\in K.$$
We prove several existence theorems for solutions to the generalized vector equilibrium problem when $K$ is an arbitrary nonempty set without any algebraic or topological structure. Furthermore, we establish that some sufficient conditions ensuring the existence of a solution for the considered conditions are imposed not on the entire domain of the bifunctions but rather on a self-segment-dense subset. We apply the obtained results to variational relation problems, vector equilibrium problems, and common fixed point problems.

\bigskip

\noindent{\bf Keywords.}
Generalized vector equilibrium, KKM mapping, generalized KKM mapping, KKM-type mapping, KKM relation, variational relation problem, equilibrium problem, common fixed point, self-segment-dense set.

\bigskip

\noindent{\bf  2010 Mathematics Subject Classification} 49J53, 49J40

\section{Introduction and preliminaries}
\hspace{0.5cm} The generalized vector equilibrium problem (for short, ${\rm GVEP}(F, D, K)$) was first introduced and studied by Robinson \cite{Robinson} in the setting of finite dimensional Euclidean spaces. Since then, various kinds of generalized vector equilibrium problems have been intensively studied by many authors in finite and infinite dimensional spaces (see, for example, \cite{D}, \cite{BTH1}, \cite{Lin1}, \cite{Lin2}, \cite{Lin3}, \cite{TH}, \cite{T}, \cite{THM} and the references therein). It is well
known that the generalized vector equilibrium problems provide a unified model of several classes of problems, for example, vector
equilibrium problems, variational relation problems, vector optimization problems and vector saddle point problems. 

The above generalized vector equilibrium problems ${\rm GVEP}(F, D, K)$, in which $D, K$ are constraint sets, $F$ are utility multivalued mappings that are often determined by equalities and inequalities of functions or by inclusions and intersections of other multivalued mappings or by general relations in product spaces. The typical instances
of generalized equilibrium are the following: 

(1) Scalar equilibrium problem: Let $D$ be a nonempty subset in a real topological vector space $X$, $K$ be a nonempty set and $f: D\times K \to \mathbb R$ be a function. If we define the mapping $F: D\times K\rightrightarrows {\mathbb R}$ by
$$F(x, y)=\{t\in \mathbb R: t+f(x, y)\geq 0\},$$
then, problem ${\rm GVEP}(F, D, K)$ is formulated as follows: Find $\bar x \in D \ \mbox{such that}$
$$f(\bar x, y)\geq 0, \ \mbox{for all} \ \ y \in K.$$
 This is called a scalar equilibrium problem. This problem was introduced by Muu and Oettli \cite{MO} in 1992. It is well-known that scalar equilibrium problems are generalizations of variational inequalities and optimization problems, including also optimization-related problems such as fixed point problems, complementarity problems, Nash equilibrium, minimax problems, etc. 

(2) Vector equilibrium problem: Let $C$ be a cone in $Y$. Now, given mapping $f: D\times K\to Y$. We define mapping $F: D\times K\rightrightarrows Y$ as follows:
$$F(x, y)=\{t\in Y: t+f(x, y)\in \Gamma\},$$
where $\Gamma=C$ or $\Gamma=Y\bck (-\operatorname{int} C)$. Then, ${\rm GVEP}(F, D, K)$ becomes: Find $\bar x\in D$ such that 
$$f(\bar x, y)\in \Gamma\ \mbox{for all}\ y\in K.$$
This is called a vector equilibrium problem. Vector quasi-equilibrium problems studied in (\cite{AOS1}, \cite{BLaj}, \cite{BL3}, \cite{FLB}, \cite{BTH3}, \cite{BTH4}, \cite{SMV}, \cite{KhaKr}, \cite{LaV2}, \cite{La1}, \cite{LYG}, \cite{LT}, \cite{Oettli3}, \cite{Oettli2}) which generalizes of scalar equilibrium problem.

(3) Variational relation problem: Let $D$ be a nonempty convex subset of a Hausdorff topological vector space, $K$ be a nonempty set. Let $R$ be a relation on $D\times K$ ( this means that $R$ be a nonempty subset of the product $D\times K$). We define mapping $F: D\times K\rightrightarrows X$ as follows:
$$F(x, y)=\{t\in X: (t+x, y)\in R\}.$$
Then, ${\rm GVEP}(F, D, K)$ becomes: Find $\bar x\in D$ such that 
$$(\bar x, y)\in R\ \mbox{for all}\ y\in K.$$
This is called a variational relation problem. As we all know, variational relation problems were introduced by Luc in \cite{Luc1} as a general model for a large class of problems in nonlinear analysis and applied mathematics, including optimization problems, variational inequalities, variational inclusions, equilibrium problems, etc. Since this manner of approach provides unified results for several mathematical problems, it has been used in many recent papers (see \cite{ABO}, \cite{JHH}, \cite{BL1}, \cite{BL2}, \cite{BLuc}, \cite{DLuc}, \cite{KLuc}, \cite{Lin11}, \cite{LBY}, \cite{LiWa}, \cite{LSS}, \cite{PuY1}, \cite{PuY2}). 

(4) Common fixed point problem: Let $D$ be a nonempty convex subset of a Hausdorff topological vector space, $K$ be a nonempty set. Let $T: D\times K\rightrightarrows D$ be a bifunction. If we define mapping $F: D\times K\rightrightarrows X$ as follows:
$$F(x, y)=x-T(x, y),$$
then, ${\rm GVEP}(F, D, K)$ becomes: Find $\bar x\in D$ such that 
$$\bar x\in T(\bar x, y)\ \mbox{for all}\ y\in K.$$
This is called a common fixed point problem for a family of multivalued maps. As we all know, common fixed point problems were introduced by Balaj (\cite{ABRe}, \cite{BLa4}, \cite{LiChYu}) and obtained some critical results on equilibrium problems and minimax inequalities.

 In order to establish the existence results of equilibrium problems, a usual assumption is monotonicity \cite{BO1}. Several authors have relaxed this rather restrictive assumption to certain types of generalized monotonicity. While most studies assume
pseudomonotonicity, existence results under the more general assumption of quasimonotonicity have also been obtained, using different proof techniques (\cite{BO}, \cite{Bris}, \cite{Ka-scha} and bibliography therein).
Very recently, Oettli \cite{Oettli3} and Chadli et al. \cite{Cha-Ri} proposed an approach to establish the
existence of solutions of equilibrium problems in the vector case by using a
result in the scalar case which involves two bifunctions, rather than one pseudomonotonicity is assumed in this work. The present paper fits into this group of articles. Its goal is to study problem ${\rm GVEP}(F, D, K)$ when the pair of bifunctions $F, G$ satisfy the following condition:
		$$
	\begin{aligned}
		& \text { for each } y \in X \text {, there exists } z \in X \text { such that } \\
		& x \in X, \theta \in G(z, x) \Longrightarrow \theta \in F(x, y),
	\end{aligned}
	$$
	where $G: D\times D\rightrightarrows Y$ is set-valued map. Let $A$ be a subset of topological vector space, we denote by $\operatorname{int} A, \operatorname{cl} A, \con A$ mean the interior, closure, convex hull of $A$, respectively. If $A\subseteq B$, we denote by $\operatorname{cl}_B A$ the closure of $A$ with respect to $B$.  	
\begin{definition}{\rm(\cite{JHH})}\rm{ Let $D$ be a nonempty subsets of a linear space $X$. Then a multivalued map $F: D\rightrightarrows X$ is called KKM mapping if for any $\{x_1,...,x_n\}\subset D$, we have
		$$\con\{x_1,...,x_n\}\subseteq \bigcup_{i=1}^nF(x_i).$$
	}
\end{definition}

\begin{definition}{\rm(\cite{JHH})}\rm{ Let $K$ be nonempty set and $D$ be nonempty convex subsets of a linear space $X$. If $F: K\rightrightarrows D$ satisfies that for any $\{x_1,...,x_n\}\subset K$, there is $\{y_1,...,y_n\}\subset D$ such that
		$$\con\{y_i: i\in J\}\subseteq \bigcup_{i\in J}F(x_i)$$
		for any nonempty subset $J$ of $\{1,...,n\}$, then $F$ is called a generalized KKM mapping.}
\end{definition}

\begin{lemma}{\rm(\cite{JHH})}\label{lem3.3}
	Let $K$ be nonempty set and $D$ be nonempty convex subsets of a topological vector space $X$. If $F: K\rightrightarrows D$ is a generalized KKM mapping with closed values such that $F(x')$ is compact for at least one $x'\in K$, then $\bigcap_{y\in K}F(y)\not=\emptyset$.
\end{lemma}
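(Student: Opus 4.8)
The plan is to reduce the statement to the classical finite-dimensional KKM lemma by a simplex argument, and then to pass from finite to arbitrary intersections using compactness. First I would observe that a finite intersection property suffices: since $F(x')$ is compact and each $F(y)$ is closed, the sets $F(x')\cap F(y)$, $y\in K$, are closed subsets of the compact space $F(x')$, so $\bigcap_{y\in K}F(y)=\bigcap_{y\in K}\bigl(F(x')\cap F(y)\bigr)\neq\emptyset$ as soon as every finite subfamily has nonempty intersection. Hence it is enough to show that $\bigcap_{i=1}^{n}F(x_i)\neq\emptyset$ for every finite $\{x_1,\dots,x_n\}\subseteq K$ (which, when needed, we enlarge to contain $x'$).

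Fixing $\{x_1,\dots,x_n\}\subseteq K$, the generalized KKM property gives $y_1,\dots,y_n\in D$ with $\con\{y_i:i\in J\}\subseteq\bigcup_{i\in J}F(x_i)$ for every nonempty $J\subseteq\{1,\dots,n\}$. Because the $y_i$ need not be affinely independent, I would not work inside $\con\{y_1,\dots,y_n\}$ directly; instead let $\Delta=\con\{e_1,\dots,e_n\}\subseteq\mathbb R^n$ be the standard simplex and $\varphi:\Delta\to D$ the affine (hence continuous) map with $\varphi(e_i)=y_i$, so that $\varphi\bigl(\con\{e_i:i\in J\}\bigr)=\con\{y_i:i\in J\}$ for every nonempty $J$. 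Put $C_i=\varphi^{-1}\bigl(F(x_i)\bigr)$; each $C_i$ is closed in the compact set $\Delta$ since $\varphi$ is continuous and $F(x_i)$ is closed. If $t\in\con\{e_i:i\in J\}$, then $\varphi(t)\in\con\{y_i:i\in J\}\subseteq\bigcup_{i\in J}F(x_i)$, so $\varphi(t)\in F(x_j)$ and $t\in C_j$ for some $j\in J$; thus $\{C_1,\dots,C_n\}$ is a KKM cover of $\Delta$. The classical KKM lemma then yields $t^{\ast}\in\bigcap_{i=1}^{n}C_i$, and $\varphi(t^{\ast})\in\bigcap_{i=1}^{n}F(x_i)$, proving this intersection nonempty.

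Applying the previous paragraph to each finite set of the form $\{x',z_1,\dots,z_m\}$ with $z_j\in K$ shows that the family $\{F(x')\cap F(y):y\in K\}$ has the finite intersection property, and the compactness reduction of the first paragraph then gives $\bigcap_{y\in K}F(y)\neq\emptyset$. The only genuinely delicate point is the passage to a simplex on which the classical KKM lemma applies — in particular handling possibly affinely dependent $y_i$ through the affine surjection $\varphi$ and checking that the pulled-back sets $C_i$ are closed and form a KKM cover; the remaining arguments are routine point-set topology. (Alternatively one could invoke Ky Fan's infinite-dimensional KKM theorem directly, but the route above keeps the proof self-contained modulo the finite-dimensional lemma.)
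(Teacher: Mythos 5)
Your proof is correct. Note that the paper does not actually prove this lemma --- it is quoted from the cited reference (Balaj's paper on variational relation problems) without argument --- so there is no in-paper proof to compare against; what you have written is the standard derivation of the generalized KKM principle from the classical finite-dimensional KKM lemma. All the steps check out: the reduction to the finite intersection property only uses that the sets $F(x')\cap F(y)$ are closed in the compact set $F(x')$ (which follows from the values being closed in $D$, since $F(x')\subseteq D$), and this compactness argument needs no Hausdorff hypothesis. The map $\varphi(t)=\sum_i t_i y_i$ is continuous because addition and scalar multiplication are continuous in a topological vector space, it lands in $D$ by convexity of $D$, so the sets $C_i=\varphi^{-1}(F(x_i))$ are closed in $\Delta$, and your verification that they form a KKM cover of the standard simplex is exactly right; the classical KKM lemma then applies. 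Your decision to pull back to the standard simplex rather than work inside $\con\{y_1,\dots,y_n\}$ is the correct way to handle possibly affinely dependent $y_i$, and enlarging each finite subfamily to contain $x'$ closes the loop with the first paragraph. The argument is complete.
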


\section{Main results}
\begin{definition}\rm{ Let $D$ be nonempty subsets of a vector space $X$. If $G: D\times D\rightrightarrows Y$ satisfies that for any $\{x_1, x_2, ...,x_n\}\subset D$ and for any $x\in \con\{x_1,, x_2, ...,x_n\}\cap D$, there is $j\in \{1, 2, ..., n\}$ such that $\theta\in G(x_j, x),$ then $G$ is called a KKM-type mapping on $D$.	}
\end{definition}

\noindent{\bf Remark 1.} Let $D$ be nonempty subsets of a vector space $X$ and $P: D \rightrightarrows X$ be a set-valued mapping. We define the set-valued mapping $G: D \times D \rightrightarrows X$ by
$$G(x, z)=x-P(z)\ \mbox{for all}\ (x, z)\in D \times D.$$
Then, $P$ is a KKM mapping on $D$ if and only if $G$ a KKM-type mapping on $D$.
 \begin{theorem}\label{theo4.0} Assume that $D$ is a nonempty convex compact subset of a vector topological space $X$, $Y$ be a topological vector space and $K$ be a nonempty set. Let $F: D \times K \rightrightarrows Y, G: D \times D \rightrightarrows Y$ be two set-valued mappings satisfying the following conditions:
 	
 	{\rm(i)} for each $y \in K$, the set $\{x \in D: \theta\in F(x, y)\}$ is closed in $D$;
 	
 	{\rm(ii)} $G$ is a KKM-type mapping on $D$;
 	
 	{\rm(iii)} for each $y \in K$, there exists $z \in D$ such that
 	$$x \in D, \theta \in G(z, x) \Longrightarrow \theta \in F(x, y).$$
 		Then, there exists a solution of problem ${\rm GVEP}(F, D, K)$.
 \end{theorem}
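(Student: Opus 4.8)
The plan is to reduce the assertion to the generalized KKM intersection principle, Lemma~\ref{lem3.3}, applied to the set-valued map $S\colon K\rightrightarrows D$ defined by
$$S(y)=\{x\in D:\theta\in F(x,y)\}.$$
A point $\bar x$ solves ${\rm GVEP}(F,D,K)$ precisely when $\bar x\in\bigcap_{y\in K}S(y)$, so it suffices to verify that $S$ meets the hypotheses of Lemma~\ref{lem3.3}: closed values, at least one compact value, and the generalized KKM property.

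First I would observe that by condition (i) each $S(y)$ is closed in $D$; since $D$ is compact, each $S(y)$ is in fact compact, which disposes of both the closedness and the compactness requirements at once. The heart of the argument is to show that $S$ is a generalized KKM mapping. Fix an arbitrary finite set $\{y_1,\dots,y_n\}\subset K$. For each $i\in\{1,\dots,n\}$, condition (iii) furnishes a point $z_i\in D$ with the property that, for $x\in D$, $\theta\in G(z_i,x)$ implies $\theta\in F(x,y_i)$, that is, $x\in S(y_i)$. I claim that $\{z_1,\dots,z_n\}$ witnesses the generalized KKM property, i.e.
$$\con\{z_i:i\in J\}\subseteq\bigcup_{i\in J}S(y_i)\qquad\text{for every nonempty }J\subseteq\{1,\dots,n\}.$$
Indeed, let $J$ be nonempty and $x\in\con\{z_i:i\in J\}$. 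Since $D$ is convex and $\{z_i:i\in J\}\subset D$, we have $x\in D$, hence $x\in\con\{z_i:i\in J\}\cap D$. Because $G$ is a KKM-type mapping on $D$ (condition (ii)), there is $j\in J$ with $\theta\in G(z_j,x)$, and by the defining property of $z_j$ this gives $x\in S(y_j)\subseteq\bigcup_{i\in J}S(y_i)$, proving the claim.

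Thus $S$ is a generalized KKM mapping from $K$ into the convex set $D$, with closed (hence compact) values, so Lemma~\ref{lem3.3} yields $\bigcap_{y\in K}S(y)\neq\emptyset$; any $\bar x$ in this intersection is a solution of ${\rm GVEP}(F,D,K)$. As for the difficulty, I do not expect a serious obstacle: the one real insight is recognizing that conditions (ii) and (iii) are precisely tailored to produce the finite-dimensional inclusions in the definition of a generalized KKM mapping, with the convexity of $D$ the only point to watch (it is what makes $\con\{z_i:i\in J\}\cap D$ coincide with $\con\{z_i:i\in J\}$). Once the auxiliary map $S$ and the points $z_i$ are in place, the proof is a direct invocation of Lemma~\ref{lem3.3}.
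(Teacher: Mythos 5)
Your proposal is correct and follows essentially the same route as the paper: both define the map $y\mapsto\{x\in D:\theta\in F(x,y)\}$, use condition (iii) to produce the witnesses $z_i$ and condition (ii) to verify the generalized KKM property, and then invoke Lemma~\ref{lem3.3}. The only difference is cosmetic (the paper names the auxiliary sets $N(z)=\{x\in D:\theta\in G(z,x)\}$ explicitly).
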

 \begin{proof}  Let us define the set-valued mapping $H: K\rightrightarrows D$ by
 	$$H(y)=\{x \in D: \theta\in F(x, y)\}\ \mbox{for all}\ y\in K.$$	
 	We want to show that $H$ is a generalized KKM map. Indeed, for each finite subset $\{y_1, y_2, ..., y_n\}$ of $K$, by condition {\rm(iii)}, there exists $\{z_1, z_2, ..., z_n\}$ be a finite subset of $D$ such that $N(z_i)\subseteq H(y_i)$ for all $i\in \{1, 2, ..., n\}$, where $N: D\rightrightarrows D$ be a set-valued mapping by
 	$$N(z)=\{x \in D: \theta \in G(z, x)\}\ \mbox{for all}\ z\in D.$$
 	 Let $J$ be a nonempty subset of $\{1, 2, ..., n\}$ and let $z\in \con\{z_i: i\in J\}$. By {\rm(ii)}, there is $j\in J$ such that 
 	$$\theta\in G(z_j,z).$$
 	This implies that $z\in N(z_j)\subseteq H(y_j)$ and hence 
 	$$\con\{z_i: i\in J\}\subseteq \bigcup_{i\in J}H(y_i)\ \mbox{for each nonempty subset}\ J\ \mbox{of}\ \{1, 2, ..., n\}.$$
 	This means that $H$ is a generalized KKM mapping. Since the values of $H$ are closed (by the condition {\rm(i)}) and the set $D$ is compact, by Lemma \ref{lem3.3} we infer that $\bigcap_{y\in K}H(y)\not=\emptyset$. Hence, there exists $\bar x\in D$ such that
 	$$\theta \in F\left(\bar x, y\right)\ \mbox{for all}\ y \in K.$$
 	 	
 \end{proof}

Next, we provide an existence theorem for the solutions of generalized vector equilibrium problem ${\rm GVEP}(F, D, K)$ when $D$ is not a compact subset. A subset $D$ of a topological space $X$ is said to be compactly closed if, for each compact subset $M$ of $X$, the set $D\cap M$ is closed in $M$.	
\begin{theorem}\label{theo4.1} Assume that $D$ is a nonempty convex subset of a vector topological space $X$, $Y$ be a topological vector space and $K$ be a nonempty set. Let $F: D \times K \rightrightarrows Y, G: D \times D \rightrightarrows Y$ be two set-valued mappings satisfying the following conditions:
	
	{\rm(i)} for each $y \in K$, the set $\{x \in D: \theta\in F(x, y)\}$ is compactly closed;
	
	{\rm(ii)} $G$ is a KKM-type mapping on $D$;
	
	{\rm(iii)} for each $y \in K$, there exists $z \in D$ such that
		$$x \in D, \theta \in G(z, x) \Longrightarrow \theta \in F(x, y);$$
	
	{\rm(iv)} (Coercivity condition) there exists a nonempty compact subset $M$ of $D$ and a finite subset $L$ of $K$ such that for each $x\in D\bck M$, there exists $y\in L$ satisfied $\theta\not\in F(x, y)$.\\
	Then, there exists a solution of problem ${\rm GVEP}(F, D, K)$ on $M$, that is, there exists $\bar x\in M$ such that $\theta \in F\left(\bar x, y\right)$ for all $y \in K$.
\end{theorem}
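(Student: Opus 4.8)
The plan is to deduce the theorem from Lemma~\ref{lem3.3} by the usual finite-intersection-property device, essentially localizing the compactness step of the proof of Theorem~\ref{theo4.0}.

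\emph{Step 1 (the feasible-set map is generalized KKM).} I would set $H(y)=\{x\in D:\theta\in F(x,y)\}$ for $y\in K$ and repeat verbatim the computation in the proof of Theorem~\ref{theo4.0}: introduce $N(z)=\{x\in D:\theta\in G(z,x)\}$, use (iii) to pick, for a finite $\{y_1,\dots,y_n\}\subseteq K$, points $z_1,\dots,z_n\in D$ with $N(z_i)\subseteq H(y_i)$, and then use the KKM-type property (ii) of $G$ to conclude $\con\{z_i:i\in J\}\subseteq\bigcup_{i\in J}H(y_i)$ for every nonempty $J\subseteq\{1,\dots,n\}$. Thus $H:K\rightrightarrows D$ is a generalized KKM mapping into the convex set $D$, and the assertion to be proved is exactly $\bigcap_{y\in K}\bigl(H(y)\cap M\bigr)\neq\emptyset$.

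\emph{Step 2 (finite intersections of the $H(y)$ are nonempty).} Given a finite $\{y_1,\dots,y_n\}\subseteq K$ and the associated $z_1,\dots,z_n$ from Step~1, I would put $S=\con\{z_1,\dots,z_n\}$. This is a convex compact subset of $D$, and $\tilde H(y_i):=H(y_i)\cap S$ defines a generalized KKM mapping from $\{y_1,\dots,y_n\}$ into $S$: for nonempty $J\subseteq\{1,\dots,n\}$ one has $\con\{z_i:i\in J\}\subseteq S$ and $\con\{z_i:i\in J\}\subseteq\bigcup_{i\in J}H(y_i)$, hence $\con\{z_i:i\in J\}\subseteq\bigcup_{i\in J}\tilde H(y_i)$. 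By (i) each $H(y_i)$ is compactly closed, so $\tilde H(y_i)$ is closed in the compact set $S$, hence compact; Lemma~\ref{lem3.3} then yields $\emptyset\neq\bigcap_{i=1}^n\tilde H(y_i)\subseteq\bigcap_{i=1}^n H(y_i)$.

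\emph{Step 3 (patching inside $M$).} For an arbitrary finite $N\subseteq K$ put $N'=N\cup L$. Step~2 gives $\bigcap_{y\in N'}H(y)\neq\emptyset$, while (iv) forces $\bigcap_{y\in N'}H(y)\subseteq M$ (if $x\notin M$, some $y\in L\subseteq N'$ has $\theta\notin F(x,y)$, i.e.\ $x\notin H(y)$). Hence $\emptyset\neq\bigcap_{y\in N'}H(y)\subseteq\bigcap_{y\in N}\bigl(H(y)\cap M\bigr)$, so $\{H(y)\cap M:y\in K\}$ is a family of subsets of the compact set $M$ with the finite intersection property, each closed in $M$ by (i); therefore $\bigcap_{y\in K}\bigl(H(y)\cap M\bigr)\neq\emptyset$, and any $\bar x$ in this intersection lies in $M$ and satisfies $\theta\in F(\bar x,y)$ for all $y\in K$. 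The crux — and the only place where more than bookkeeping is involved — is Step~2: since here neither $D$ nor any single $H(y)$ is compact, Lemma~\ref{lem3.3} cannot be applied to $H$ on $K$ directly, and the remedy is to run it on the compact simplex $\con\{z_1,\dots,z_n\}$ attached to each finite subfamily by the generalized KKM property, after checking that the restricted map keeps the generalized KKM property and now has compact values; the remaining care is merely to enlarge any given finite subset of $K$ by the fixed finite set $L$ before invoking the coercivity condition.
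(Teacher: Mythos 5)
Your proposal is correct and follows essentially the same strategy as the paper: localize each finite subfamily (enlarged by $L$) to the compact convex hull $\con\{z_{y_1},\dots,z_{y_n}\}$ supplied by condition (iii), obtain a nonempty intersection there by a KKM-type intersection lemma, and then use the coercivity condition (iv) together with the finite intersection property on the compact set $M$. The only cosmetic difference is that you apply Lemma~\ref{lem3.3} directly to the restriction of $H$ to that compact hull (whose values are closed by (i)), whereas the paper applies Ky Fan's lemma to an auxiliary map $P(z)=\operatorname{cl}_{Q}\{x\in Q:\theta\in G(z,x)\}$ and then transfers to $F$; both are valid.
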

\begin{proof} Let $\mathcal{F}_L$ be the family of all finite subsets of $K$ that contain the set $L$ and choose arbitrarily an $B \in \mathcal{F}_L$. From {\rm(iii)}, for each $y \in B$, there exists $z_{y} \in D$ such that
\begin{align}\label{CT2}
	\left\{x \in D: \theta \in G(z_{y}, x)\right\} \subseteq \left\{x \in D: \theta \in F(x, y)\right\}.
\end{align}
We set
	$$Q:=\con\left\{z_{y}: y \in B\right\},$$	
	which is compact. Let us define the set-valued mapping $P:\left\{z_{y}: y \in B\right\} \rightrightarrows Q$ by
	$$P(z)=\operatorname{cl}_{Q}\{x \in Q: \theta \in G(z, x)\}.$$
Now, we want to show that $P$ is a KKM map. From {\rm(ii)}, for each finite subset $\{z_{y_1}, z_{y_2}, ..., z_{y_k}\}$ of $\left\{z_{y}: y \in B\right\}$ and for any $x\in \con\{z_{y_1}, z_{y_2}, ..., z_{y_k}\}$, there exists an index $j\in \{1, 2, ..., k\}$ such that $\theta\in G(z_{y_j}, x)$. This mean that $x\in P(z_{y_j})$. Therefore, 
	$$\con\{z_{y_1}, z_{y_2}, ..., z_{y_k}\}\subseteq \bigcup_{i=1}^k P(z_{y_i}).$$  
Hence, $P$ is a KKM map. By Ky Fan's lemma, we get 
	$$ \bigcap_{y \in B} P\left(z_{y}\right)\not=\emptyset.$$
	Combining {\rm(i)} and (\ref{CT2}), we get
	$$
	\begin{aligned}
		\bigcap_{y \in B} P\left(z_{y}\right)&=\bigcap_{y \in B} \operatorname{cl}_{Q}\left[\left\{x \in D: \theta \in G(z_y, x) \right\} \cap Q\right] \\
		&\subseteq \bigcap_{y \in B} \operatorname{cl}_{Q}[\{x \in D: \theta \in F(x, y)\} \cap Q] \\
		& =\bigcap_{y \in B}\{x \in D: \theta \in F(x, y)\} \cap Q\\
		&=\bigcap_{y \in B}\{x \in Q: \theta \in F(x, y)\} .
	\end{aligned}
	$$
	Therefore, we obtain
	$$\bigcap_{y \in B}\{x \in Q: \theta \in F(x, y)\} \neq \emptyset.$$
	From {\rm(iv)}, we get 
	$$\bigcap_{y \in L}\left\{x \in Q: \theta \in F\left(x, y\right)\right\} \subseteq M.$$
	 It implies
	$$\bigcap_{y \in B}\{x \in Q: \theta \in F(x, y)\} \subseteq M.$$
	Therefore, the set
	$$\mathcal{R}(B):=\{x \in M: \theta \in F(x, y) \text { for all } y \in B\}$$
	is nonempty and closed in $M$. On the other hand, from $\mathcal{R}\left(B_{1} \cup B_{2}\right) \subseteq \mathcal{R}\left(B_{1}\right) \cap \mathcal{R}\left(B_{2}\right)$ for all $B_{1}, B_{2}\in \mathcal{F}_L$, we conclude that the family $\{\mathcal{R}(B): B \in \mathcal{F}_L\}$ has the finite intersection property. Since $M$ is compact, we deduce $\bigcap_{B \in \mathcal{F}_L} \mathcal{R}(B) \neq \emptyset$. Therefore, there exists a point $\bar x \in \bigcap_{B \in \mathcal{F}_L} \mathcal{R}(B)$ satisfies the conclusion of the theorem. 
 \end{proof}

\noindent{\bf Remark 2.} The coercivity condition {\rm(iv)} of Theorem \ref{theo4.1} is clearly satisfied, if $D$ is compact. In this case, Theorem \ref{theo4.1} reduces to Theorem \ref{theo4.0}.

\noindent{\bf Example 1.} Let $K$ be a nonempty of the interval $(-\infty, 0)$ that contain the point $-\frac{1}{2}$ and $X = Y=\mathbb R, D = [0,1)$. Consider the mappings $F: D\times K \rightrightarrows Y$ and $G: D\times D \rightrightarrows Y$ by
$$F(x,y)=\left\{\begin{array}{l}
	(-\infty, xy],\ \mbox{if}\ y\in K, y\leq -1,\\
	{[x+y, 2x+y+2]}, \ \mbox{if}\ y\in K, y>-1,\\
\end{array}\right.$$
$$G(x, z) = (-\infty, x-z],$$
for all $(x, z, y)\in D\times D\times K.$

Obviously, the first part of condition {\rm(i)} of Theorem \ref{theo4.1} is satisfied, since the set 
$$\{x\in D: 0 \in F(x, y)\}=\left\{\begin{array}{l}
	\{0\},\ \mbox{if}\ y\in K, y\leq -1,\\
	{[0, -y]}, \ \mbox{if}\ y\in K, y>-1,\\
\end{array}\right.$$
is closed for all $y\in K$. 

We show that $G$ is a KKM- type on $D$. Indeed, for each finite subset $\{x_1, x_2, ..., x_n\}$ of $D$ and for any $x\in \con\{x_1, x_2, ..., x_n\}$, we put $x_j=\max_{1\leq i\leq n} x_i$. Then $x_j-x\geq 0$. This implies that $0 \in G(x_j, x)$. Therefore, $G$ is a KKM-type on $D$. Hence, the condition {\rm(ii)} of Theorem \ref{theo4.1} is satisfied.

Next, for each $y\in K$, there exists $z=0\in D$ such that if $0 \in G(z, x)$, then we have $z-x=-x\geq 0$. Hence, $x=0$. On the other hand, we have
$$F(0, y)=\left\{\begin{array}{l}
	(-\infty,0],\ \mbox{if}\ y\in K, y\leq -1,\\
	{[y, y+2]}, \ \mbox{if}\ y\in K, y>-1.\\
\end{array}\right.$$
This implies $0 \in F(0, y)$ for all $y\in K.$  Therefore, the condition {\rm(iii)} of Theorem \ref{theo4.1} is satisfied.

Moreover, for $M=[0, \frac{1}{2}]$ is a nonempty compact subset of $D$ and a finite subset $L=\{-\frac{1}{2}\}$ of $K$, we have $0 \notin F(x, -\frac{1}{2})=[x-\frac{1}{2}, 2x+\frac{3}{2}]$ for all $x\in D\bck M=(\frac{1}{2}, 1)$. Thus, the condition {\rm(iv)} of Theorem \ref{theo4.1} is satisfied.

Then all conditions of Theorem \ref{theo4.1} are satisfied and $\bar x=0\in M$ is the unique solution of generalized vector equilibrium problem ${\rm GVEP}(F, D, K)$. 
	
\noindent{\bf Remark 3.} As shown in the following example, assumption {\rm(iii)} in Theorem \ref{theo4.0} and Theorem \ref{theo4.1} is essential.

\noindent{\bf Example 2.} Let $X = Y=\mathbb R, D=K = [0,1]$. Consider the mappings $F: D\times K \rightrightarrows Y$ and $G: D\times D \rightrightarrows Y$ by
$$F(x,y)=(-\infty,-1-x-y],$$
$$G(x, z) = (-\infty, x-z],$$
for all $(x, z, y)\in D\times D\times K.$

Obviously, the first part of condition {\rm(i)} of Theorem \ref{theo4.0} is satisfied, since the set 
$$\{x\in D: 0 \in F(x, y)\}=\emptyset$$
is closed, for all $y\in K$. 

We show that $G$ is a KKM- type on $D$. Indeed, for each finite subset $\{x_1, x_2, ..., x_n\}$ of $D$ and for any $x\in \con\{x_1, x_2, ..., x_n\}$, we put $x_j=\max_{1\leq i\leq n} x_i$. Then $x_j-x\geq 0$. Which implies that $0 \in G(x_j, x)$. Therefore, $G$ is a KKM-type on $D$. Hence, the condition {\rm(ii)} of Theorem \ref{theo4.0} is satisfied.

Now, by $0\notin F(x, 0)$ for all $x\in D$, the condition {\rm(iii)} of Theorem \ref{theo4.0} is not satisfied. Moreover,  the generalized vector equilibrium problem ${\rm GVEP}(F, D, K)$ has no solution.

\begin{corollary}\label{cor4.11} Assume that $D$ is a nonempty convex subset of a vector topological space $X$, $Y$ be a topological vector space and $K$ be a nonempty set. Let $F: D \times K \rightrightarrows Y, G: D \times D \rightrightarrows Y$ be two set-valued mappings satisfying the following conditions:
	
	{\rm(i)} for each $y \in K$, the set $\{x \in D: \theta\in F(x, y)\}$ is compactly closed;
	
	{\rm(ii)} $G$ is a KKM-type mapping on $D$;
	
	{\rm(iii)} for each $y \in K$, there exists $z \in D$ such that
	$$x \in D, \theta \in G(z, x) \Longrightarrow \theta \in F(x, y);$$
	
	{\rm(iv)} (Coercivity condition) there exists a nonempty compact subset $M$ of $D$ and a point $y_0\in K$ such that $\theta\not\in F(x, y_0)$ for all $x\in D\bck M$.\\
Then, there exists a solution of problem ${\rm GVEP}(F, D, K)$ on $M$, that is, there exists $\bar x\in M$ such that $\theta \in F\left(\bar x, y\right)$ for all $y \in K$.
\end{corollary}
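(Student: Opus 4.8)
The plan is to deduce this corollary directly from Theorem \ref{theo4.1} by observing that its coercivity hypothesis {\rm(iv)} is exactly the special case of the coercivity hypothesis in Theorem \ref{theo4.1} in which the finite set $L$ is a singleton. Concretely, I would set $L := \{y_0\}$, which is a finite subset of $K$, and note that for each $x \in D \bck M$ the point $y_0 \in L$ satisfies $\theta \notin F(x, y_0)$; this is precisely condition {\rm(iv)} of Theorem \ref{theo4.1} for the pair $(M, L)$. Hypotheses {\rm(i)}, {\rm(ii)}, {\rm(iii)} of the corollary are verbatim those of Theorem \ref{theo4.1}, so all four assumptions of that theorem are met.

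Applying Theorem \ref{theo4.1} then produces a point $\bar x \in M$ with $\theta \in F(\bar x, y)$ for all $y \in K$, which is exactly the asserted conclusion, so the proof is complete in one line once the reduction is set up.

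Since the reduction is immediate, there is no genuine obstacle here; the only thing that has to be checked is the trivial set-theoretic remark that a one-element set is finite and that the quantifier ``there exists $y \in L$'' is witnessed by the unique element $y_0$. If one preferred a self-contained argument, one could instead repeat the proof of Theorem \ref{theo4.1} with $B$ ranging over the finite subsets of $K$ containing $y_0$: the step $\bigcap_{y \in L}\{x \in Q : \theta \in F(x,y)\} \subseteq M$ there collapses to $\{x \in Q : \theta \in F(x, y_0)\} \subseteq M$, which holds by hypothesis {\rm(iv)}, and the Ky Fan / finite-intersection-property argument on the compact set $M$ then closes the proof unchanged.
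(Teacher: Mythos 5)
Your proof is correct and coincides with the paper's own argument: both take $L=\{y_0\}$ as the finite subset of $K$ and apply Theorem \ref{theo4.1} directly. Nothing further is needed.
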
	
\begin{proof} Let $L=\{y_0\}$. Clearly, $L$ is a finite subset of $K$. Hence, we get Corollary \ref{cor4.11} by Theorem \ref{theo4.1}.

\end{proof}

	\begin{corollary}\label{cor4.12} Assume that $D$ is a nonempty convex subset of a vector topological space $X$, $Y$ be a topological vector space. Let $F: D \times D \rightrightarrows Y$ be a set-valued mappings satisfying the following conditions:
		
		{\rm(i)} for each $z \in D$, the set $\{x \in D: \theta\in F(x, z)\}$ is compactly closed;
		
		{\rm(ii)} $F$ is a KKM-type mapping on $D$;
		
		{\rm(iii)} for each $y \in K$, there exists $z \in D$ such that
		$$x \in X, \theta \in F(z, x) \Longrightarrow \theta \in F(x, y);$$
				
		{\rm(iv)} (Coercivity condition) there exists a nonempty compact subset $M$ of $D$ and a finite subset $L$ of $K$ such that for each $x\in D\bck M$, there exists $y\in L$ satisfied $\theta\not\in F(x, y)$.\\
		Then, there exists a solution of problem ${\rm GVEP}(F, D, K)$ on $M$, that is, there exists $\bar x\in M$ such that $\theta \in F\left(\bar x, y\right)$ for all $y \in D$.
	\end{corollary}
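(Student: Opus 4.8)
The plan is to obtain Corollary~\ref{cor4.12} as the special case of Theorem~\ref{theo4.1} in which the abstract index set $K$ is taken to be $D$ itself and the auxiliary bifunction $G$ is taken to coincide with $F$. Thus I would put $K := D$ and $G := F$, viewing $F$ simultaneously as a map $D\times D\rightrightarrows Y$ playing the role of the objective bifunction and the role of the KKM-type bifunction, and then verify that the four hypotheses of Theorem~\ref{theo4.1} reduce exactly to hypotheses (i)--(iv) stated here.

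Checking this is routine. Hypothesis (i) of the corollary, that $\{x\in D:\theta\in F(x,z)\}$ is compactly closed for each $z\in D$, is exactly (i) of Theorem~\ref{theo4.1} once $K=D$. Hypothesis (ii), that $F$ is a KKM-type mapping on $D$, is (ii) of Theorem~\ref{theo4.1} under the identification $G=F$. Hypothesis (iii), that for each $y$ there is $z\in D$ with $\theta\in F(z,x)\Rightarrow\theta\in F(x,y)$, is (iii) of Theorem~\ref{theo4.1} with $G=F$; here the corollary's ``$x\in X$'' may be read as ``$x\in D$'' without loss, since the membership $\theta\in F(z,x)$ already forces $(z,x)\in D\times D$. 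Hypothesis (iv) is verbatim the coercivity condition of Theorem~\ref{theo4.1}, with $L$ a finite subset of $D$. Applying Theorem~\ref{theo4.1} then produces $\bar x\in M$ with $\theta\in F(\bar x,y)$ for all $y\in K=D$, which is the claimed conclusion, i.e.\ $\bar x$ solves ${\rm GVEP}(F,D,D)$ on $M$.

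The only point requiring care --- and it is a bookkeeping matter rather than a genuine obstacle --- is the consistent interpretation of the symbol $K$ in the statement: items (iii) and (iv) are phrased with $y\in K$ and $L\subseteq K$, whereas the conclusion quantifies over $y\in D$, so the corollary is to be read with $K$ understood as $D$ throughout (equivalently, with the test variable of $F$ restricted to the domain on which $F$ is defined). Once this identification is fixed, no argument beyond the invocation of Theorem~\ref{theo4.1} is needed.
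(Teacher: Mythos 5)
Your proposal is correct and is exactly the paper's argument: the author likewise obtains this corollary as the instance of Theorem~\ref{theo4.1} with $K=D$ and $G=F$. Your extra remark that the stray symbol $K$ in (iii)--(iv) must be read as $D$ is a fair observation about a typographical slip in the statement, but the underlying proof is the same one-line specialization.
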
	
	\begin{proof} This is immediate from Theorem \ref{theo4.1} with $K=D, F=G.$   	
	\end{proof}	
\hspace{0.5cm}In what follows, we obtain existing result of the solution for generalized vector equilibrium problem ${\rm GVEP}(F, D, D)$, in which the conditions that we consider are imposed on a special type of dense set that we call self-segment-dense \cite{LaV1}. The notion of a self-segment-dense set has been successfully used in the context of scalar and set-valued equilibrium problems in \cite{LaV1}, generalized set-valued monotone operators in \cite{LaV2} and vector equilibrium problems in \cite{La1}. 	
 
\begin{definition}{\rm Consider the sets $U\subseteq V\subseteq X$ and assume that $V$ is convex. We say
		that $U$ is self-segment-dense in $V$, iff $U$ is dense in $V$ and for all $x, y\in U$, the set $[x, y]\cap U$ is dense in $[x, y]$, where $[x, y]=\{z\in X: z=tx+(1-t)y, 0\leq t \leq 1\}$}.
\end{definition}
{\bf Remark 4.} Obviously in one dimension the concept of a self-segment-dense set  is equivalent to the concept of a dense set.

\begin{lemma}{\rm(\cite{LaV2})}\label{lem:1} Let $X$ be a Hausdorff locally convex topological vector
	space, let $V\subseteq X$ be a convex set, and let $U\subseteq V$ a self-segment-dense set in $V$. Then,
	for all finite subset $\{u_1, u_2, . . . , u_n\}\subseteq U$, one has
	$$\operatorname{cl}(\con\{u_1, u_2, . . . , u_n\} \cap U) = \con\{u_1, u_2, . . . , u_n\}.$$
\end{lemma}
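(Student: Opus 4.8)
The plan is to prove the two inclusions separately, the nontrivial one by induction on $n$. Write $F=\{u_1,\dots,u_n\}$ and $S=\con F$. First I would record that $S$ is closed: it is the image of the standard simplex $\Delta_{n-1}=\{(\lambda_1,\dots,\lambda_n):\lambda_i\ge 0,\ \sum_i\lambda_i=1\}$ under the continuous map $(\lambda_1,\dots,\lambda_n)\mapsto\sum_i\lambda_i u_i$, hence compact, hence closed since $X$ is Hausdorff. Consequently $\operatorname{cl}(S\cap U)\subseteq\operatorname{cl}(S)=S$, which gives the inclusion $\operatorname{cl}(S\cap U)\subseteq S$ for free; all the content lies in the reverse inclusion $S\subseteq\operatorname{cl}(S\cap U)$. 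The base case $n=1$ is immediate, since then $S=\{u_1\}$ and $u_1\in U$.

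For the inductive step I would assume the equality for $n-1$ points and take $x=\sum_{i=1}^n\lambda_i u_i\in S$ with $\lambda_i\ge 0$, $\sum_i\lambda_i=1$. If $\lambda_n=1$ then $x=u_n\in S\cap U$, and if $\lambda_n=0$ then $x\in S':=\con\{u_1,\dots,u_{n-1}\}=\operatorname{cl}(S'\cap U)\subseteq\operatorname{cl}(S\cap U)$ by the induction hypothesis; so I may assume $0<\lambda_n<1$ and set $y=\sum_{i=1}^{n-1}\tfrac{\lambda_i}{1-\lambda_n}u_i\in S'$, so that $x=(1-\lambda_n)y+\lambda_n u_n$. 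Fixing an arbitrary open neighborhood $W$ of $x$, it suffices to find a point of $S\cap U$ in $W$. Since $g:X\to X$, $g(w)=(1-\lambda_n)w+\lambda_n u_n$, is continuous with $g(y)=x$, the set $g^{-1}(W)$ is an open neighborhood of $y$; by the induction hypothesis $y\in\operatorname{cl}(S'\cap U)$, so I can choose $y'\in g^{-1}(W)\cap S'\cap U$. Then $p:=g(y')=(1-\lambda_n)y'+\lambda_n u_n$ satisfies $p\in W$ and $p\in[y',u_n]$.

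Finally I would invoke self-segment-density: because $y',u_n\in U$, the set $[y',u_n]\cap U$ is dense in $[y',u_n]$, while $W\cap[y',u_n]$ is a nonempty (it contains $p$) relatively open subset of $[y',u_n]$, so there exists $x'\in W\cap[y',u_n]\cap U$. Since $y'\in S'$ and $u_n\in S$ and $S$ is convex, we get $x'\in[y',u_n]\subseteq S$, hence $x'\in S\cap U\cap W$. As $W$ was an arbitrary neighborhood of $x$, this yields $x\in\operatorname{cl}(S\cap U)$, closing the induction.

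I expect the difficulty to be organizational rather than conceptual. The one genuine idea is that one cannot approximate $x$ directly by points of $S\cap U$ — $U$ need not be convex — so the argument must proceed in two stages: first move $y$ to a nearby point $y'$ of $U$ inside the lower-dimensional face $S'$ (this is exactly where the induction hypothesis enters), and then slide along the one-dimensional segment $[y',u_n]$, where the self-segment-dense property is precisely the hypothesis that furnishes a point of $U$ arbitrarily close to the target. The only other point requiring care is recording that $\con F$ is closed, both so that the easy inclusion holds and so that the inductive statement is a genuine equality; note that Hausdorffness is used only for this, and local convexity of $X$ is not actually needed.
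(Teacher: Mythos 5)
Your proof is correct: the closedness of $\con\{u_1,\dots,u_n\}$ as a compact set in a Hausdorff space gives the easy inclusion, and the induction combined with the segment-density applied to the pair $y',u_n\in U$ correctly supplies a point of $U$ in $W\cap[y',u_n]\subseteq S$. The paper itself states this lemma as a citation from L\'aszl\'o--Viorel without reproducing a proof, and your argument is essentially the standard inductive one from that source, so there is nothing further to compare.
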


\begin{theorem}\label{theo4.3} Assume that $D$ is a nonempty convex and compact subset of a Hausdorff locally convex vector topological space $X$, $Y$ be a topological vector space and $N\subseteq D$ be a self-segment-dense set. Let $F, G: D \times D \rightrightarrows Y$ be two set-valued mappings satisfying the following conditions:
	
	{\rm(i)} for each $z \in N$, the set $\{x \in D: \theta\in F(x, z)\}$ is closed in $D$;
	
	{\rm(ii)} for each $x \in D$, the set $\{z \in D\bck N: \theta\not \in F(x, z)\}$ is open in $D$;
	
	{\rm(iii)} $G$ is a KKM-type on $N$;
	
	{\rm(iv)} for each $z \in N$, there exists $z' \in D$ such that
	$$x \in D, \theta \in G(z', x) \Longrightarrow \theta \in F(x, z).$$	
Then, there exists a solution of problem ${\rm GVEP}(F, D, D)$.
\end{theorem}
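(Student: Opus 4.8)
The plan is to split the argument into two stages: first produce a point $\bar x\in D$ solving the \emph{restricted} problem ${\rm GVEP}(F,D,N)$ by a generalized-KKM argument, and then promote this to a solution of ${\rm GVEP}(F,D,D)$ using conditions {\rm(i)} and {\rm(ii)}. For the first stage I define $H\colon N\rightrightarrows D$ by
$$H(z)=\{x\in D:\theta\in F(x,z)\}\qquad(z\in N).$$
By {\rm(i)} each $H(z)$ is closed in the compact set $D$, hence compact, so in order to apply Lemma \ref{lem3.3} with $K=N$ it suffices to check that $H$ is a generalized KKM mapping.

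To that end, fix a finite set $\{z_1,\dots,z_n\}\subseteq N$. By {\rm(iv)} choose witnesses $z_1',\dots,z_n'\in N$ with
$$\{x\in D:\theta\in G(z_i',x)\}\subseteq H(z_i)\qquad(1\le i\le n),$$
and claim that $\con\{z_i':i\in J\}\subseteq\bigcup_{i\in J}H(z_i)$ for every nonempty $J\subseteq\{1,\dots,n\}$. Indeed, if $x\in\con\{z_i':i\in J\}\cap N$, then since $G$ is KKM-type on $N$ (condition {\rm(iii)}) there is $j\in J$ with $\theta\in G(z_j',x)$, whence $x\in H(z_j)$; thus $\con\{z_i':i\in J\}\cap N\subseteq\bigcup_{i\in J}H(z_i)$. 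The right-hand side is a finite union of closed subsets of $D$, hence closed, while Lemma \ref{lem:1} (self-segment-density of $N$) gives $\con\{z_i':i\in J\}=\operatorname{cl}\bigl(\con\{z_i':i\in J\}\cap N\bigr)$; taking closures in the previous inclusion yields the claim. Hence $H$ is a generalized KKM mapping, and Lemma \ref{lem3.3} produces $\bar x\in\bigcap_{z\in N}H(z)$, i.e. $\theta\in F(\bar x,z)$ for all $z\in N$.

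For the second stage, let $z\in D\setminus N$. Condition {\rm(ii)} asserts that $\{z'\in D\setminus N:\theta\notin F(\bar x,z')\}$ is open in $D$; being a subset of $D\setminus N$, which has empty interior in $D$ since $N$ is dense in $D$, this open set is empty, so $\theta\in F(\bar x,z)$. (Intuitively, {\rm(ii)} together with {\rm(i)} is precisely the semicontinuity needed to pass to limits of nets drawn from $N$.) Combining the two stages, $\theta\in F(\bar x,z)$ for every $z\in D$, so $\bar x$ solves ${\rm GVEP}(F,D,D)$.

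The crux is the verification that $H$ is a generalized KKM mapping: $G$ is only assumed KKM-type on the possibly small set $N$, whereas the convex hulls $\con\{z_i':i\in J\}$ entering the definition of a generalized KKM mapping generally protrude from $N$. Reconciling these is exactly the purpose of the self-segment-dense hypothesis via Lemma \ref{lem:1}, which identifies each such convex hull with the closure of its trace on $N$, together with the closedness of the sets $H(z)$ from {\rm(i)}, which lets the desired inclusion survive that closure. A point to handle carefully is that the witnesses $z_i'$ furnished by {\rm(iv)} must lie in $N$ for Lemma \ref{lem:1} to apply to them; if one only knows $z_i'\in D$, the argument should instead be run on the compact convex set $\con\{z_i':i\in J\}$ directly, replacing $H$ by $z\mapsto\operatorname{cl}_DH(z)$ and then recovering $\bar x\in H(z)$ for $z\in N$ from {\rm(i)}.
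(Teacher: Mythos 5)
Your proposal follows essentially the same route as the paper's first proof of Theorem \ref{theo4.3} (the paper in fact gives two proofs, the second using a partition of unity and Brouwer's theorem): define $H(z)=\{x\in D:\theta\in F(x,z)\}$ for $z\in N$, establish the KKM inclusion only on $\con\{\cdot\}\cap N$, upgrade it to the full convex hull via Lemma \ref{lem:1} together with the closedness from {\rm(i)}, intersect over $N$, and finally use {\rm(ii)} and the density of $N$ to handle $D\bck N$. Your stage-two argument (the set in {\rm(ii)} is an open subset of $D$ contained in $D\bck N$, which has empty interior, hence is empty) is exactly the paper's, phrased more cleanly. The only real divergence is cosmetic: you package the argument as a generalized KKM mapping fed into Lemma \ref{lem3.3}, while the paper shows $H$ is an ordinary KKM map and cites Ky Fan's lemma.

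The point you flag at the end is, however, a genuine issue and not one you can wave away. Your verification that $H$ is a generalized KKM mapping needs the witnesses $z_1',\dots,z_n'$ to lie in $N$ twice over: once so that the KKM-type hypothesis on $G$ (which is assumed only on $N$) can be applied to a point of $\con\{z_i':i\in J\}\cap N$, and once so that Lemma \ref{lem:1} applies to the set $\{z_i':i\in J\}$. Condition {\rm(iv)} only places $z'$ in $D$, and your suggested repair (running the argument on $\con\{z_i':i\in J\}$ with $\operatorname{cl}_D H$) does not fix either obstruction. You should be aware that the paper's own proof glosses over the same difficulty: from $\theta\notin F(z,z_i)$ it concludes $\theta\notin G(z_i,z)$, which reads {\rm(iv)} as if the witness for $z_i$ were $z_i$ itself; taken literally, {\rm(iv)} only yields $\theta\notin G(z_i',z)$, and the contradiction with the KKM-type property then requires exactly the membership $z_i'\in N$ that neither of you has. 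So your argument is complete under the (apparently intended) strengthening that the witness in {\rm(iv)} can be chosen in $N$, or with $z'=z$; as written against the stated hypotheses, it shares the paper's gap, but you at least identify it explicitly.
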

\begin{proof} We can prove the theorem using two methods:\\	
	{\bf Method 1 (Use Ky Fan's lemma).} We define the set-valued mapping $H: N\rightrightarrows D$ by  
$$H(z):=\{x \in D: \theta \in F(x, z)\}\ \mbox{for each}\ z \in N.$$
Clearly, for each $z \in N$, $H(z)$ is closed subset since {\rm(i)}. Let $\{z_1, z_2, ..., z_n\}$ be a finite subset of $N$. We show that \begin{align}\label{CT2-1}
\con\{z_1, z_2, ..., z_n\}\cap N\subseteq \bigcup_{i=1}^nH(z_i).
\end{align}
Suppose contrary, then there is $z\in \con\{z_1, z_2, ..., z_n\}\cap N$ such that $z\not\in H(z_i)$ for $i=1, 2, ..., n$. This implies that $\theta\not\in F(z, z_i)$ for $i=1, 2, ..., n$. By the condition {\rm(iv)}, we get $\theta\not\in G(z_i, z)$ for $i=1, 2, ..., n$. This is a contradiction with $G$ is KKM-type on $N$. Hence, (\ref{CT2-1}) is prove. From (\ref{CT2-1}) and by the closedness of $\bigcup_{i=1}^nH(z_i)$, we have
$$\operatorname{cl}(\con\{z_1, z_2, ..., z_n\}\cap N)\subseteq \bigcup_{i=1}^nH(z_i).$$
From Lemma \ref{lem:1}, we get	
$$\con\{z_1, z_2, ..., z_n\}\subseteq \bigcup_{i=1}^nH(z_i).$$
Therfore, $H$ is a KKM map. By Ky Fan's lemma, we get
$$\bigcap_{z\in N}H(z)\not=\emptyset.$$
Then there is $\bar x\in D$ such that $\bar x\in H(z)$ for all $z\in N$. This implies 
$\theta \in F(\bar x, z)$ for all $z\in N$. If there is $z_0\in D\bck N$ such that $\theta \notin F(\bar x, z_0)$, then by {\rm(ii)}, there exists a neighborhood $U$ of $z_0$ such that $\theta \notin F(\bar x, z)$ for all $z\in U$. By the denseness of $N$ in $D$, we have $U\cap N\not=\emptyset$. We can choose $u_0\in U\cap N$, then we have $\theta \notin F(\bar x, u_0)$. This is contradicts with $\theta \in F(\bar x, z)$ for all $z\in N$. Thus, $\theta \in F(\bar x, z)$ for all $z\in D$.\\	
{\bf Method 2 (Use the unit partition theorem).} For any $z\in D$, we put 
	$$\Omega_z:=\{x \in D: \theta \notin F(x, z)\}.$$
	Clearly, $\Omega_z$ is an open subset of $D$ for each $z \in N$ since {\rm(i)}. If we assume that the conclusion of the theorem would be false. Then for each $x\in D$, there is $z\in D$ such that 
\begin{align}\label{CT3}
	\theta\not\in F(x, z). 
	\end{align}
	 We show that $\{\Omega_z\}_{z\in N}$ is an open cover of $D$. Indeed, assume there is $x_0\in D$ such that $x_0\not\in \Omega_z$ for all $z\in N$. Then we have $\theta \in F(x_0, z)$ for all $z\in N$. If there is $z_0\in D\bck N$ such that $\theta \notin F(x_0, z_0)$, then by {\rm(ii)}, there exists a neighborhood $U$ of $z_0$ such that $\theta \notin F(x_0, z)$ for all $z\in U$. By the denseness of $N$ in $D$, we have $U\cap N\not=\emptyset$. We can choose $u_0\in U\cap N$, then we have $\theta \notin F(x_0, u_0)$. This is contradicts with $\theta \in F(x_0, z)$ for all $z\in N$. Thus, $\theta \in F(x_0, z)$ for all $z\in D$, which contradicts (\ref{CT3}).  Consequently, $\{\Omega_z\}_{z\in N}$ is an open cover of $D$. Since $D$ is a compact, there exists $z_1, z_2, ..., z_n \in N$ such that $D\subseteq \cup_{i=1}^n \Omega_{z_i}$. Let us consider a continuous partition of unity functions $(\phi_i)_{i=1, 2, ..., n}$ associated with this open covering of $D$ and define the map $h: \con\{z_1, z_2, ..., z_n\}\to \con\{z_1, z_2, ..., z_n\}$ by 
	$$ h(x)=\sum_{i=1}^n \phi_i(x)z_i.$$
	Obviously $h$ is continuous, and $\con\{z_1, z_2, ..., z_n\}$ is a compact and convex subset of
	the finite-dimensional space $\operatorname{spa}\{z_1, z_2, ..., z_n\}$. Hence, by the Brouwer fixed point
	theorem, there exists $z^*\in \con\{z_1, z_2, ..., z_n\}$ such that $h(z^*) = z^*.$
	Now, we put 
	$$I(z^*)=\{i \in \{1, 2, ..., n\}: \phi_i(z^*)>0\}.$$
	 Then we have $I(z^*)\not=\emptyset$ and $z^*\in \con\{z_i: i\in I(z^*)\}$. By $\phi_i(z^*)>0$ for all $i\in I(z^*)$ then $z^*\in \cap_{i\in I(z^*)}\Omega_{z_i}$. Since $\cap_{i\in I(z^*)}\Omega_{z_i}$ is open, we obtain that there exists an open and convex neighborhood $W$ of $z^*$ such that $W\subseteq\cap_{i\in I(z^*)}\Omega_{z_i}$. Since 
	 $$\con\{z_i: i\in I(z^*)\}\cap W\not=\emptyset$$
	  and by Lemma \ref{lem:1}, we have $$\operatorname{cl}(\con\{z_i: i\in I(z^*)\}\cap N)\cap W\not=\emptyset.$$ 
	This implies $\con\{z_i: i\in I(z^*)\}\cap N\cap W\not=\emptyset$. Let $z_0\in \con\{z_i: i\in I(z^*)\}\cap N\cap W$, where $z_0=\sum_{i\in I(z^*)}\lambda_iz_i$, $\lambda_i\geq 0$ for all $i\in I(z^*)$ and $\sum_{i\in I(z^*)}\lambda_i=1$. On the other hand, since $z_0\in W\subseteq\cap_{i\in I(z^*)}\Omega_{z_i}$, then we have $\theta \notin F(z_0, z_i)$ for all $i\in I(z^*)$. By {\rm(iv)}, we get $\theta \notin G(z,z_0)$ for all $z\in N$. In particular, we have $\theta \notin G(z_i,z_0)$ for all $i\in I(z^*)$. This is a contradiction with $G$ is KKM-type on $N$.	 
\end{proof}		

\noindent{\bf Example 3.}  Let $X=\mathbb R$, $D=[0, 1]$ and $N=(0, 1]$. Then $N$ be a self-segment-dense set in $D$. Consider the mappings $F: D\times D \rightrightarrows \mathbb R$ and $G: D\times D \rightrightarrows \mathbb R$ by
$$F(x,z)=\left\{\begin{array}{l}
	[xz  -\frac{1}{3}, +\infty),\ \mbox{if}\ z\in N,\\
	{[0, +\infty)}, \ \mbox{if}\ z\not\in N,\\
\end{array}\right.$$
$$G(x, z) = (-\infty, x-z],$$
for all $(x, z)\in D\times D.$ Obviously, the condition {\rm(i)} of Theorem \ref{theo4.3} is satisfied, since for $z\in N$, the set 
$$\{x\in D: 0 \in F(x, z)\}=\{x\in D: 0 \in [xz  -\frac{1}{3}, +\infty)\}=[0, \frac{1}{3z}]\cap D$$
is closed in $D$. For each $x\in D$, we have
$$\{z\in D\bck N: 0 \not\in F(x, z)\}=\{z\in D\bck N: 0 \not\in [0, +\infty)\}=
	\emptyset,$$
is open. Hence, the condition {\rm(ii)} of Theorem \ref{theo4.3} is satisfied.

We show that $G$ is a KKM- type on $N$. Indeed, for each finite subset $\{x_1, x_2, ..., x_n\}$ of $N$ and for any $x=\sum_{i=1}^n\lambda_i x_i\in \con\{x_1, x_2, ..., x_n\}$, where $ \sum_{i=1}^n\lambda_i=1, \lambda_i\geq 0$ for all $i=1, 2, ..., n$. We put $x_j =\max_{1\leq i\leq n} x_i$. Then, we get $x_j -x\geq 0$. This implies that $0 \in G(x_j, x)$. Therefore, $G$ is a KKM-type on $N$. Hence, the condition {\rm(iii)} of Theorem \ref{theo4.3} is satisfied.

Next, for each $z\in N$, there exists $z'=0\in D$ such that if $0 \in G(z', x)$, then $z'-x=-x\geq 0$. Hence, $x=0$. On the other hand, we have
$$F(0, z)=[-\frac{1}{3}, +\infty).$$
This implies that $0 \in F(0, z)$ for all $z\in N.$  Therefore, the condition {\rm(iv)} of Theorem \ref{theo4.3} is satisfied.\\
Then all conditions of Theorem \ref{theo4.3} are satisfied, and $\bar x=0$ is a solution of generalized vector equilibrium problem ${\rm GVEP}(F, D, D)$. \\
{\bf Remark 5.} In what follows, we show that the assumption that $N$ is self-segment-dense, in the hypotheses of the previous theorem, is essential, and it cannot be replaced by the denseness of $N$.	

\noindent{\bf Example 4.}   Let us consider the Hilbert space of square-summable sequences $l_2$ and let $D=\{x\in l_2: \Vert x \Vert\leq 1\}, N=\{x\in l_2: \Vert x \Vert=1\}$. Then, it is clear that $N$ is dense with respect to the weak
topology in $D$, but $N$ is not self-segment-dense in $D$ (see \cite{La1}, Example 2.4). Consider the mappings $F: D\times D \rightrightarrows \mathbb R$ and $G: D\times D \rightrightarrows \mathbb R$ by
$$F(x,z)=\left\{\begin{array}{l}
	[1-\Vert z\Vert, +\infty),\ \mbox{if}\ z\not\in N,\\
	{[\Vert x\Vert, +\infty)}, \ \mbox{if}\ z\in N,\\
\end{array}\right.$$
$$G(x, z) = (-\infty, \Vert x\Vert -\Vert z \Vert],$$
for all $(x, z)\in D\times D.$ Obviously, the condition {\rm(i)} of Theorem \ref{theo4.3} is satisfied, since for $z\in N$, the set 
$$\{x\in D: 0 \in F(x, z)\}=\{x\in D: 0 \in [\Vert x\Vert, +\infty)\}=\{0\}$$
is closed. For each $x\in D$, we have
$$\{z\in D\bck N: 0 \not\in F(x, z)\}=\{z\in D\bck N: 0 \not\in [1-\Vert z\Vert, +\infty)\}=D\bck N$$
is open in $D$. Hence, the condition {\rm(ii)} of Theorem \ref{theo4.3} is satisfied.

We show that $G$ is a KKM- type on $N$. Indeed, for each finite subset $\{x_1, x_2, ..., x_n\}$ of $N$ and for any $x=\sum_{i=1}^n\lambda_i x_i\in \con\{x_1, x_2, ..., x_n\}$, where $ \sum_{i=1}^n\lambda_i=1, \lambda_i\geq 0$ for all $i=1, 2, ..., n$. We put $\Vert x_j \Vert=\max_{1\leq i\leq n} \Vert x_i\Vert$. Then, we get 
$$\Vert x\Vert=\Vert \sum_{i=1}^n\lambda_i x_i \Vert\leq \sum_{i=1}^n\lambda_i \Vert x_i\Vert \leq \Vert x_j \Vert.$$
Thus, $\Vert x_j \Vert -\Vert x\Vert\geq 0$. This implies that $0 \in G(x_j, x)$. Therefore, $G$ is a KKM-type on $N$. Hence, the condition {\rm(iii)} of Theorem \ref{theo4.3} is satisfied.

Next, for each $z\in N$, there exists $z'=0\in D$ such that if $0 \in G(z', x)$, then $\Vert z'\Vert-\Vert x\Vert=-\Vert x\Vert\geq 0$. Hence, $x=0$. On the other hand, we have
$$F(0, z)=[0, +\infty).$$
This implies that $0 \in F(0, z)$ for all $z\in N.$  Therefore, the condition {\rm(iv)} of Theorem \ref{theo4.3} is satisfied.

On the other hand, for each $x\in D$, we have
$$0\not\in F(x, z)=[1-\Vert z\Vert, +\infty)\ \mbox{for all}\ z\in D\bck N.$$
Hence, the generalized vector equilibrium problem ${\rm GVEP}(F, D, D)$ has no solution.

	\begin{theorem}\label{theo4.4} 	Assume that $D$ is a nonempty convex subset of a Hausdorff locally convex topological vector space $X$, $Y$ be a topological vector space and $N\subseteq D$ be a self-segment-dense set. Let $F, G: D \times D \rightrightarrows Y$ be two set-valued mappings satisfying the following conditions:

		{\rm(i)} for each $z \in N$, the set $\{x \in D: \theta\in F(x, z)\}$ is compactly closed;
		
		{\rm(ii)} for each $x \in D$, the set $\{z \in D\bck N: \theta \not\in F(x, z)\}$ is open in $D$;
		
		{\rm(iii)} $G$ is a KKM-type on $N$;
		
		{\rm(iv)} for each $z \in N$, there exists $z' \in D$ such that
		$$x \in D, \theta \in G(z', x) \Longrightarrow \theta \in F(x, z).$$
		
		{\rm(v)} (Coercivity condition) there exists a nonempty compact subset $M$ of $D$ and a point $z_0\in D$ such that $\theta\not\in F(x, z_0)$ for all $x\in D\bck M$.\\
		Then, there exists a solution of problem ${\rm GVEP}(F, D, D)$ on $M$, that is, there exists $\bar x\in M$ such that $\theta \in F\left(\bar x, z\right)$ for all $z \in D$.	
		\end{theorem}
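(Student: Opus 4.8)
The plan is to combine the coercivity and finite‑intersection‑property scheme of the proof of Theorem~\ref{theo4.1} with the self‑segment‑dense density argument (Method~1) of the proof of Theorem~\ref{theo4.3}. Set $H(z):=\{x\in D:\theta\in F(x,z)\}$ for $z\in D$. First I would prove that the restriction $H:N\rightrightarrows D$ is a KKM map, i.e.\ $\con\{z_1,\dots,z_n\}\subseteq\bigcup_{i=1}^{n}H(z_i)$ for every finite $\{z_1,\dots,z_n\}\subseteq N$. Conditions {\rm(iii)} and {\rm(iv)} first yield the weaker inclusion $\con\{z_1,\dots,z_n\}\cap N\subseteq\bigcup_{i=1}^{n}H(z_i)$ (if some $z\in\con\{z_1,\dots,z_n\}\cap N$ lay outside every $H(z_i)$, then $\theta\notin F(z,z_i)$ for all $i$, and {\rm(iv)} together with the KKM‑type property {\rm(iii)} of $G$ on $N$ would give a contradiction, exactly as in Theorem~\ref{theo4.3}). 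Since $\con\{z_1,\dots,z_n\}$ is compact and each $H(z_i)$ is compactly closed by {\rm(i)}, the set $\bigcup_{i=1}^{n}\bigl(H(z_i)\cap\con\{z_1,\dots,z_n\}\bigr)$ is closed in $\con\{z_1,\dots,z_n\}$; it contains $\con\{z_1,\dots,z_n\}\cap N$, which is dense in $\con\{z_1,\dots,z_n\}$ by Lemma~\ref{lem:1}, so it must coincide with $\con\{z_1,\dots,z_n\}$. This establishes the KKM property of $H$ on $N$.

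Next I would localize to compact pieces. For a nonempty finite set $B\subseteq N$ put $Q_{B}:=\con B$, which is convex, compact and contained in $D$, and define $H_{B}:B\rightrightarrows Q_{B}$ by $H_{B}(z):=H(z)\cap Q_{B}$. The KKM property of $H$ on $N$ restricts to make $H_{B}$ a (generalized) KKM map, and the values $H_{B}(z)$ are closed in the compact set $Q_{B}$ by {\rm(i)}, so Lemma~\ref{lem3.3} gives $\bigcap_{z\in B}\{x\in Q_{B}:\theta\in F(x,z)\}\neq\emptyset$. To bring in the coercivity point $z_{0}$, suppose first $z_{0}\in N$, and for a finite $B\subseteq N$ write
$$\mathcal R(B):=\{x\in M:\theta\in F(x,z)\ \text{for all}\ z\in B\}.$$
Since $\mathcal R(B)\supseteq\mathcal R(B\cup\{z_{0}\})$, it suffices to show $\mathcal R(B)\neq\emptyset$ when $z_{0}\in B$; in that case {\rm(v)} forces $\bigcap_{z\in B}\{x\in Q_{B}:\theta\in F(x,z)\}\subseteq\{x\in Q_{B}:\theta\in F(x,z_{0})\}\subseteq M$, hence $\mathcal R(B)\neq\emptyset$. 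Each $\mathcal R(B)$ is closed in $M$ (a finite intersection of sets $\{x\in D:\theta\in F(x,z)\}\cap M$, each closed in the compact set $M$ by {\rm(i)}), and $\mathcal R(B_{1}\cup B_{2})\subseteq\mathcal R(B_{1})\cap\mathcal R(B_{2})$, so the family $\{\mathcal R(B):B\subseteq N\ \text{finite and nonempty}\}$ has the finite intersection property. Compactness of $M$ then yields $\bar x\in\bigcap_{B}\mathcal R(B)\subseteq M$, i.e.\ $\bar x\in M$ with $\theta\in F(\bar x,z)$ for all $z\in N$. (If instead $z_{0}\in D\bck N$, then, since $N$ is dense in $D$, the open subset $\{z\in D\bck N:\theta\notin F(x,z)\}$ of $D$ appearing in {\rm(ii)} must be empty for every $x\in D$, so $\theta\in F(x,z_{0})$ for all $x\in D$; by {\rm(v)} this means $D=M$ is compact, and the conclusion follows directly from Theorem~\ref{theo4.3}.)

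It remains to pass from $N$ to all of $D$, as in Theorem~\ref{theo4.3}. If there were $z^{*}\in D\bck N$ with $\theta\notin F(\bar x,z^{*})$, then $z^{*}$ would lie in the set $A:=\{z\in D\bck N:\theta\notin F(\bar x,z)\}$, which is open in $D$ by {\rm(ii)}; but a nonempty open subset of $D$ must meet the dense set $N$, whereas $A\cap N=\emptyset$, a contradiction. Hence $\theta\in F(\bar x,z)$ for all $z\in D$, with $\bar x\in M$, which is the assertion of the theorem.

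The step I expect to be the main obstacle is making these mechanisms fit together. Because {\rm(i)} supplies only compactly closed level sets, one cannot argue as in Theorem~\ref{theo4.3} that $\bigcup_i H(z_i)$ is closed; one must first intersect with the compact hull $\con\{z_1,\dots,z_n\}$ and only then apply Lemma~\ref{lem:1}. At the same time the coercivity is provided by a single point $z_{0}$ rather than by a finite set as in Theorem~\ref{theo4.1}, which is what forces the split into the cases $z_{0}\in N$ and $z_{0}\in D\bck N$; note also that the partition‑of‑unity argument of Method~2 in Theorem~\ref{theo4.3} does not adapt, since it relies on extracting a finite subcover of the now noncompact set $D$. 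A further point requiring care is the precise interplay of {\rm(iii)} and {\rm(iv)} in the first step, because {\rm(iii)} is assumed only on the self‑segment‑dense set $N$.
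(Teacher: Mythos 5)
Your argument is correct (to the same standard of rigour as the paper's own proofs) but it reaches the conclusion by a genuinely different route. The paper fixes a finite $B\subseteq N$ containing the coercivity point, passes to the auxiliary points $z_y$ supplied by {\rm(iv)}, applies Ky Fan's lemma to the map $z\mapsto\operatorname{cl}\{x\in Q:\theta\in G(z,x)\}$ on $Q=\con\{z_y:y\in B\}$, and only afterwards transfers the nonempty intersection to the level sets of $F$ via {\rm(iv)} and {\rm(i)}. You instead upgrade the map $z\mapsto\{x\in D:\theta\in F(x,z)\}$ to a genuine KKM map on $N$ first (density of $\con\{z_1,\dots,z_n\}\cap N$ from Lemma~\ref{lem:1} plus closedness of $\bigcup_i H(z_i)\cap\con\{z_1,\dots,z_n\}$ from {\rm(i)}), and then apply Lemma~\ref{lem3.3} directly to its restriction to $Q_B=\con B$. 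Your version buys two things. First, you invoke Lemma~\ref{lem:1} only for finite subsets of $N$, which is what that lemma actually requires, whereas the paper applies it (and the KKM-type property of $G$ on $N$) to the points $z_{y}$, which {\rm(iv)} only places in $D$. Second, your treatment of the coercivity point is more robust: the paper's opening argument that $z_0\in N$ tacitly assumes $D\bck M\neq\emptyset$, while your observation that {\rm(ii)} together with density of $N$ forces $\{z\in D\bck N:\theta\notin F(x,z)\}=\emptyset$ handles $z_0\in D\bck N$ cleanly and reduces that case to Theorem~\ref{theo4.3}.

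One caveat: the step you defer to Theorem~\ref{theo4.3} --- deducing $\con\{z_1,\dots,z_n\}\cap N\subseteq\bigcup_{i}H(z_i)$ from {\rm(iii)} and {\rm(iv)} --- is exactly as delicate in your write-up as in the paper. The contrapositive of {\rm(iv)} gives $\theta\notin G(z_i',z)$ for the auxiliary points $z_i'\in D$, whereas {\rm(iii)} only produces some $j$ with $\theta\in G(z_j,z)$ for $z\in\con\{z_1,\dots,z_n\}\cap N$ with the $z_i$ themselves in $N$; the contradiction therefore needs $z_i'=z_i$, or at least $z_i'\in N$ and $z\in\con\{z_1',\dots,z_n'\}$. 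Since the paper's own proof of Theorem~\ref{theo4.3} makes the identical leap, this is an inherited imprecision rather than a new gap in your proposal, but it is the one place where "exactly as in Theorem~\ref{theo4.3}" does not discharge the obligation.
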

		\begin{proof}  We first show that $z_0\in N$. Suppose contrary, then $z_0\in D\bck N$. By {\rm(v)}, we get $z_0\in \{z \in D\bck N: \theta\not\in F(x, z)\}$ for each $x\in D\bck M$. Since {\rm(ii)}, for each $x\in D\bck M$, there exists a neighborhood $W$ of $z_0$ such that 
			$$W\subseteq \{z \in D\bck N: \theta\not\in F(x, z)\}.$$
			From the denseness of $N$ in $D$, we have $W\cap N\not=\emptyset$. By $W\cap N\subseteq W$, then we get 
			$$\emptyset\not= W\cap N\subseteq \{z \in D\bck N: \theta\not\in F(x, z)\},$$
			which a contradiction.	 
			
			Let $\mathcal{F}_N$ be the family of all finite subsets of $N$ that contain the point $z_0$. Let $B \in \mathcal{F}_N$ be arbitrarily set. Then $B\subseteq N$. From {\rm(iv)}, for each $y \in B$, there exists $z_{y} \in D$ such that
			\begin{align}\label{CT4}
			\{x \in D: \theta \in G(z_{y}, x)\} \subseteq \left\{x \in D: \theta \in F(x, y)\right\}.
			\end{align}
			We set
			$$Q:=\con\left\{z_{y}: y \in B\right\},$$	
			which is compact. Let us define the set-valued mapping $H:\left\{z_{y}: y \in B\right\} \rightrightarrows Q$ by
			$$H(z)=\{x \in Q: \theta \in G(z, x)\}.$$
			Let $\{z_{y_1}, z_{y_2}, ..., z_{y_n}\}$ be a finite subset of $\left\{z_{y}: y \in B\right\}$. We show that 
			\begin{align}\label{CT4-1}
				\con\{z_{y_1}, z_{y_2}, ..., z_{y_n}\}\cap N\subseteq \bigcup_{i=1}^nH(z_{y_i}).
			\end{align}
			Suppose contrary, then there is $z\in \con\{z_{y_1}, z_{y_2}, ..., z_{y_n}\}\cap N$ such that $$z\not\in H(z_{y_i})\ \mbox{for}\ i=1, 2, ..., n.$$
			This implies that 
			$$\theta \not\in G(z_{y_i},  z)\  \mbox{for}\ i=1, 2, ..., n.$$
			This is a contradiction with $G$ is a KKM-type on $N$. Hence, (\ref{CT4-1}) is prove. From (\ref{CT4-1}), we get
			$$\operatorname{cl}(\con\{z_{y_1}, z_{y_2}, ..., z_{y_n}\}\cap N)\subseteq \operatorname{cl}\Big(\bigcup_{i=1}^nH(z_{y_i})\Big)=\bigcup_{i=1}^n\operatorname{cl}\big(H(z_{y_i})\big).$$
			From Lemma \ref{lem:1}, we conclude	
			$$\con\{z_{y_1}, z_{y_2}, ..., z_{y_n}\}\subseteq \bigcup_{i=1}^n\operatorname{cl}\big(H(z_{y_i})\big).$$
			Therfore, $\operatorname{cl}H$ is a KKM map, where $\operatorname{cl}H$ is the closure map of the map $H$, defined by $\operatorname{cl}H(z)=\operatorname{cl}(H(z))$ for all $z\in \{z_{y}: y \in B\}$. By Ky Fan's lemma, we get
			$$\bigcap_{y\in B}\operatorname{cl}H(z_y)\not=\emptyset.$$
			Combining {\rm(i)} and (\ref{CT4}), we have
			$$
			\begin{aligned}
				\bigcap_{y \in B} \operatorname{cl}H(z_y)&=\bigcap_{y \in B} \operatorname{cl}_{Q}\left[\left\{x \in D: \theta \in G(z_y, x) \right\} \cap Q\right] \\
				&\subseteq \bigcap_{y \in B} \operatorname{cl}_{Q}[\{x \in D: \theta \in F(x, y)\} \cap Q] \\
				& =\bigcap_{y \in B}\{x \in D: \theta \in F(x, y)\} \cap Q\\
				&=\bigcap_{y \in B}\{x \in Q: \theta \in F(x, y)\} .
			\end{aligned}
			$$
			Therefore, we obtain
			$$\bigcap_{y \in B}\{x \in Q: \theta \in F(x, y)\} \neq \emptyset.$$
			From {\rm(v)}, we get 
			$$\left\{x \in Q: \theta \in F\left(x, z_o\right)\right\} \subseteq M.$$
			It implies
			$$\bigcap_{y \in B}\{x \in Q: \theta \in F(x, y)\} \subseteq M.$$
			Therefore, the set
			$$\mathcal{R}(B):=\{x \in M: \theta \in F(x, y) \text { for all } y \in B\}$$
			is nonempty and closed in $M$. On the other hand, from $\mathcal{R}\left(B_{1} \cup B_{2}\right) \subseteq \mathcal{R}\left(B_{1}\right) \cap \mathcal{R}\left(B_{2}\right)$ for all $B_{1}, B_{2}\in \mathcal{F}_L$, we conclude that the family $\{\mathcal{R}(B): B \in \mathcal{F}_L\}$ has the finite intersection property. Since $M$ is compact, we deduce $\bigcap_{B \in \mathcal{F}_L} \mathcal{R}(B) \neq \emptyset$. Therefore, there exists a point $\bar x$ such that $\bar x \in \bigcap_{B \in \mathcal{F}_L} \mathcal{R}(B)$. This implies that $\bar x\in M$ and
				\begin{align}\label{CT4-2}
		\theta \in F(\bar x, z)\ \mbox{for all}\ z\in N.
			\end{align}
	If there is $z_0\in D\bck N$ such that $\theta \notin F(\bar x, z_0)$, then by {\rm(ii)}, there exists a neighborhood $U$ of $z_0$ such that $\theta \notin F(\bar x, z)$ for all $z\in U$. By the denseness of $N$ in $D$, we have $U\cap N\not=\emptyset$. We can choose $u_0\in U\cap N$, then we have $\theta \notin F(\bar x, u_0)$. This is contradicts with (\ref{CT4-2}). Thus, 
	$$\theta \in F(\bar x, z)\ \mbox{for all}\ z\in D.$$
	\end{proof}		
		
\noindent{\bf Example 5.}  Let $X=\mathbb R$, $D=[0, 1)$ and $N=(0, 1)$. Then $N$ be a self-segment-dense set in $D$. Consider the mappings $F: D\times D \rightrightarrows \mathbb R$ and $G: D\times D \rightrightarrows \mathbb R$ by
$$F(x,z)=\left\{\begin{array}{l}
	[x, +\infty),\ \mbox{if}\ z\in N,\\
	{[0, +\infty)}, \ \mbox{if}\ z\not\in N,\\
\end{array}\right.$$
$$G(x, z) = (-\infty, x-z],$$
for all $(x, z)\in D\times D.$ Obviously, the condition {\rm(i)} of Theorem \ref{theo4.4} is satisfied, since for $z\in N$, the set 
$$\{x\in D: 0 \in F(x, z)\}=\{x\in D: 0 \in [x, +\infty)\}=\{0\}$$
is closed. For each $x\in D$, we have
$$\{z\in D\bck N: 0 \not\in F(x, z)\}=\{z\in D\bck N: 0 \not\in [0, +\infty)\}=
\emptyset,$$
is open. Hence, the condition {\rm(ii)} of Theorem \ref{theo4.4} is satisfied.

We show that $G$ is a KKM- type on $N$. Indeed, for each finite subset $\{x_1, x_2, ..., x_n\}$ of $N$ and for any $x=\sum_{i=1}^n\lambda_i x_i\in \con\{x_1, x_2, ..., x_n\}$, where $ \sum_{i=1}^n\lambda_i=1, \lambda_i\geq 0$ for all $i=1, 2, ..., n$. We put $x_j =\max_{1\leq i\leq n} x_i$. Then, we get $x_j -x\geq 0$. This implies $0 \in G(x_j, x)$. Therefore, $G$ is a KKM-type on $N$. Hence, the condition {\rm(iii)} of Theorem \ref{theo4.4} is satisfied.

Next, for each $z\in N$, there exists $z'=0\in D$ such that if $0 \in G(z', x)$, then $z'-x=-x\geq 0$. Hence, $x=0$. On the other hand, we have
$$0 \in F(0, z)=[0, +\infty)\ \mbox{for all}\ z\in N.$$
Therefore, the condition {\rm(iv)} of Theorem \ref{theo4.4} is satisfied.

Moreover, for $M=[0, \frac{1}{2}]$ is a nonempty compact subset of $D$ and $z_0=\frac{1}{2}\in D$, we have $0 \notin F(x, \frac{1}{2})=[x, +\infty)$ for all $x\in D\bck M=(\frac{1}{2}, 1)$. Thus, the condition {\rm(iv)} of Theorem \ref{theo4.4} is satisfied.

Then all conditions of Theorem \ref{theo4.4} are satisfied, and $\bar x=0$ is a solution of generalized vector equilibrium problem ${\rm GVEP}(F, D, D)$.

\section{Some applications}

\subsection{Application to variational relation problems}

\begin{definition}\rm{ Let $D$ be nonempty convex subsets of a linear space $X$ and let $R'$ be a relation on $D\times D$. We say that $R'$ is KKM on $D\times D$ if for any $\{x_1, x_2, ...,x_n\}\subset D$ and for any $x\in \con\{x_1,, x_2, ...,x_n\}$, there is $j\in \{1, 2, ..., n\}$ such that $(x_j, x)\in R'$.	}
\end{definition}
\begin{corollary}\label{cor3.1} Assume that $D$ is a nonempty convex subset of a vector topological space $X$ and $K$ be a nonempty set. Let $R_1$ be a relation on $D\times K$ and let $R_2$ be a relation on $D\times D$ satisfying the following conditions:
	
	{\rm(i)} for each $y \in K$, the set $\{x \in D: (x, y)\in R_1\}$ is compactly closed;
	
	{\rm(ii)} $R_2$ is a KKM on $D\times D$;
	
	{\rm(iii)} for each $y \in K$, there exists $z \in D$ such that
	$$x \in D, (z, x) \in R_2 \Longrightarrow (x, y) \in R_1;$$
	
	{\rm(iv)} (Coercivity condition) there exists a nonempty compact subset $M$ of $D$ and a finite subset $L$ of $K$ such that for each $x\in D\bck M$, there exists $y\in L$ satisfied $(x, y)\not\in R_1$.\\
	Then, there exists $\bar x\in M$ such that $(\bar x, y) \in R_1$ for all $y \in K$.
\end{corollary}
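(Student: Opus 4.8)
The plan is to derive Corollary~\ref{cor3.1} directly from Theorem~\ref{theo4.1} by translating the two relations $R_1$ and $R_2$ into set-valued maps to which that theorem applies. Concretely, take $Y=X$ and define $F\colon D\times K\rightrightarrows X$ and $G\colon D\times D\rightrightarrows X$ by
$$F(x,y)=\{t\in X:(t+x,y)\in R_1\},\qquad G(z,x)=\{t\in X:(t+z,x)\in R_2\},$$
so that $\theta\in F(x,y)\iff (x,y)\in R_1$ and $\theta\in G(z,x)\iff (z,x)\in R_2$. With these identifications the four hypotheses of the corollary become literally the four hypotheses of Theorem~\ref{theo4.1}: condition (i) here says $\{x\in D:\theta\in F(x,y)\}$ is compactly closed; condition (ii) together with the definition of a KKM relation on $D\times D$ says exactly that for any $\{x_1,\dots,x_n\}\subseteq D$ and any $x\in\con\{x_1,\dots,x_n\}\cap D$ there is $j$ with $(x_j,x)\in R_2$, i.e. $\theta\in G(x_j,x)$, which is the definition of $G$ being a KKM-type mapping on $D$; condition (iii) is the implication $x\in D,\ \theta\in G(z,x)\Rightarrow\theta\in F(x,y)$; and condition (iv) is the coercivity condition $\theta\notin F(x,y)$ for suitable $y\in L$.

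First I would state these definitions of $F$ and $G$ explicitly and record the equivalences $\theta\in F(x,y)\Leftrightarrow (x,y)\in R_1$ and $\theta\in G(z,x)\Leftrightarrow (z,x)\in R_2$, which are immediate from the constructions since $t=\theta$ forces $t+x=x$. Then I would verify the four hypotheses of Theorem~\ref{theo4.1} one at a time; each verification is a one-line rewriting using the equivalences just noted, with the only slightly substantive point being the match between ``KKM relation on $D\times D$'' and ``KKM-type mapping,'' where one must note that membership $x\in\con\{x_1,\dots,x_n\}$ in the relation definition may be taken intersected with $D$ (it already is, since $x$ ranges over points we feed to $G$, whose domain is $D$), so the two notions coincide. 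Having checked all of (i)--(iv), Theorem~\ref{theo4.1} yields $\bar x\in M$ with $\theta\in F(\bar x,y)$ for all $y\in K$, and translating back via the equivalence gives $(\bar x,y)\in R_1$ for all $y\in K$, which is the desired conclusion.

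There is essentially no obstacle here: the corollary is a pure specialization, and the ``hard part,'' such as it is, amounts only to being careful that the convexity/compactness data ($D$ convex, $M$ compact, $L$ finite) carry over verbatim and that the KKM-type condition is phrased for the correct domain. One should also note that the ambient spaces match: $X$ is a topological vector space as required, and we may simply set $Y=X$, so the hypothesis ``$Y$ a topological vector space'' in Theorem~\ref{theo4.1} is trivially met. Thus the proof is just: define $F,G$ as above, observe the equivalences, invoke Theorem~\ref{theo4.1}, and translate the conclusion back.
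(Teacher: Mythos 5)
Your proposal is correct and follows essentially the same route as the paper: define $F(x,y)=\{t\in X:(t+x,y)\in R_1\}$ and $G$ analogously from $R_2$, observe that $\theta\in F(x,y)\iff(x,y)\in R_1$ and $\theta\in G(z,x)\iff(z,x)\in R_2$, check that hypotheses (i)--(iv) translate verbatim into those of Theorem~\ref{theo4.1}, and invoke that theorem. Your write-up is in fact more careful than the paper's (which asserts the hypothesis transfer without verification), particularly on the point that convexity of $D$ makes the intersection with $D$ in the KKM-type condition harmless.
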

\begin{proof} We define the set-valued mappings $F: D \times K\rightrightarrows X, G: D \times D\rightrightarrows X$ by
	$$F(x, y)=\{t\in X: (t+x, y)\in R_1\}, $$
	$$G(x, z)=\{t\in X: (t+x, z)\in R_2\},\ \mbox{for all}\ (x, z, y)\in D \times D\times K.$$
Hence, all conditions of Theorem \ref{theo4.1} are satisfied. Apply Theorem \ref{theo4.1}, there exists $\bar x\in M$ such that $\theta \in F\left(\bar x, y\right)$ for all $y \in K$.  
This implies
$$(\bar x, y) \in R_1\ \mbox{for all}\ y\in K.$$ 
\end{proof}
\noindent{\bf Example 6.} Let $K$ be a nonempty of the interval $[\frac{4}{3}, +\infty)$ that contain the point $\frac{4}{3}$ and $X = \mathbb R, D = (0,1]$. Consider the relations $R_1$ on $D \times K$ and $R_2$ on $D \times D$ by
$$R_1=\{(x, y)\in D \times K: x^2-3xy+3\leq 0\},$$
$$ R_2=\{(x, z)\in D \times D: x-z\leq 0\},$$
$\ \mbox{for all}\ (x, z, y)\in D\times D\times K.$        
Obviously, the first part of condition {\rm(i)} of Corollary \ref{cor3.1} is satisfied, since the set 
$$\{x\in D: (x, y)\in R_1\}=\{x\in D: x^2-3xy+3\leq 0\}=[\frac{3y-\sqrt{9y^2-12}}{2}, \frac{3y+\sqrt{9y^2-12}}{2}]$$
is closed for all $y\in K$. 

We show that $R_2$ is a KKM. Indeed, for each finite subset $\{x_1, x_2, ..., x_n\}$ of $D$ and for any $x\in \con\{x_1, x_2, ..., x_n\}$, we put $x_j=\min_{1\leq i\leq n} x_i$. Then $x_j-x\leq 0$, i.e, $(x_j, x)\in R_2$. Therefore, $R_2$ is a KKM. Hence, the condition {\rm(ii)} of Corollary \ref{cor3.1} is satisfied.

Next, for each $y\in K$, there exists $z=1$ such that if $(z, x)\in R_2$, i.e, $z-x=1-x\leq 0$ then $x=1$. This implies that $(1, y)\in R_1$, because $1^2-3y+3=4-3y\leq 0$ for all $y\in K.$  Therefore, the condition {\rm(iii)} of Corollary \ref{cor3.1} is satisfied.

Moreover, for $M=[\frac{1}{2}, 1]$ is a nonempty compact subset of $D$ and a finite subset $L=\{\frac{4}{3}\}$ of $K$, we get $x^2-3xy_0+3= x^2-4x+3>0$ for all $x\in D\bck M=(0, \frac{1}{2})$ and for $y_0=\frac{4}{3}\in L$. This implies that $(x, y_0)\not\in R_1$ for all $x\in D\bck M$. Thus, the condition {\rm(iv)} of Corollary \ref{cor3.1} is satisfied.
 
Then all conditions of Corollary \ref{cor3.1} are satisfied, and $\bar x=1$ is the unique solution of variational relation problem.

\begin{corollary}\label{cor3.2} Assume that $D$ is a nonempty convex subset of a Hausdorff locally convex vector topological space $X$, $N\subseteq D$ be a self-segment-dense set. Let $R_1, R_2$ be are relations on $D\times D$ satisfying the following conditions:
	
	{\rm(i)} for each $z \in N$, the set $\{x \in D: (x, z)\in R_1\}$ is compactly closed;
	
	{\rm(ii)} for each $x \in D$, the set $\{z \in D\bck N: (x, z)\not\in R_1\}$ is open in $D$;
	
	{\rm(iii)} $R_2$ is a KKM-type on $N$;
	
	{\rm(iv)} for each $y \in N$, there exists $z \in D$ such that
	$$x \in D, (z, x)\in R_2 \Longrightarrow (x, y) \in R_1.$$
	
	{\rm(v)} (Coercivity condition) there exists a nonempty compact subset $M$ of $D$ and a point $z_0\in D$ such that $(x, z_0)\not\in R_1$ for all $x\in D\bck M$.\\	
	Then, there exists $\bar x\in M$ such that $\left(\bar x, y\right) \in R_1$ for all $y \in D$.
\end{corollary}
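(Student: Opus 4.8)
The plan is to reduce Corollary \ref{cor3.2} to Theorem \ref{theo4.4} by encoding the two relations as set-valued bifunctions, exactly as Corollary \ref{cor3.1} was derived from Theorem \ref{theo4.1}. Concretely, I would take the target space to be $X$ itself (so that $\theta$ is the origin of $X$) and define $F, G : D \times D \rightrightarrows X$ by
$$F(x, z) = \{t \in X : (t + x, z) \in R_1\}, \qquad G(x, z) = \{t \in X : (t + x, z) \in R_2\}$$
for all $(x, z) \in D \times D$. The key elementary observation is that $\theta \in F(x, z)$ if and only if $(x, z) \in R_1$, and $\theta \in G(x, z)$ if and only if $(x, z) \in R_2$. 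Hence for every $z$ we have $\{x \in D : \theta \in F(x, z)\} = \{x \in D : (x, z) \in R_1\}$ and, for every $x$, $\{z \in D \setminus N : \theta \notin F(x, z)\} = \{z \in D \setminus N : (x, z) \notin R_1\}$, so hypotheses (i) and (ii) of Corollary \ref{cor3.2} become verbatim hypotheses (i) and (ii) of Theorem \ref{theo4.4}.

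Next I would translate the remaining conditions. For (iii): given a finite set $\{x_1, \dots, x_n\} \subset N$ and $x \in \con\{x_1, \dots, x_n\} \cap D$, the hypothesis that $R_2$ is KKM-type on $N$ yields an index $j$ with $(x_j, x) \in R_2$, i.e. $\theta \in G(x_j, x)$; thus $G$ is a KKM-type map on $N$, which is condition (iii) of Theorem \ref{theo4.4}. For (iv): fix $z \in N$ and take the $z' \in D$ furnished by hypothesis (iv) of the corollary; if $\theta \in G(z', x)$ then $(z', x) \in R_2$, which forces $(x, z) \in R_1$, that is $\theta \in F(x, z)$ — precisely condition (iv) of the theorem. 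For the coercivity condition (v), the assumption $(x, z_0) \notin R_1$ for all $x \in D \setminus M$ reads $\theta \notin F(x, z_0)$ for all $x \in D \setminus M$, which is condition (v) of Theorem \ref{theo4.4}.

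With all five hypotheses of Theorem \ref{theo4.4} verified, I would invoke it to obtain $\bar x \in M$ with $\theta \in F(\bar x, y)$ for all $y \in D$; unwinding the definition of $F$ gives $(\bar x, y) \in R_1$ for all $y \in D$, which is exactly the asserted conclusion. I do not anticipate any genuine obstacle here: the argument is a dictionary translation between the relational and set-valued formulations. The only minor points meriting a line of care are that the ambient target space must be chosen as $X$ so that $\theta$ coincides with the origin of $X$, and that $X$ is Hausdorff locally convex as Theorem \ref{theo4.4} requires — both being granted in the statement of the corollary — together with the harmless notational convention that "$R_2$ KKM-type on $N$" means the KKM-type property of the associated $G$ restricted to points $x_1,\dots,x_n \in N$.
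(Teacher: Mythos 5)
Your proposal is correct and follows exactly the paper's route: the paper also defines $F(x,y)=\{t\in X:(t+x,y)\in R_1\}$ and $G(x,z)=\{t\in X:(t+x,z)\in R_2\}$ and invokes Theorem \ref{theo4.4}, though it merely asserts that all hypotheses are satisfied while you actually verify the translation of each one. The only cosmetic slip is writing $\con\{x_1,\dots,x_n\}\cap D$ instead of $\cap N$ when checking condition (iii); this does not affect the argument.
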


\begin{proof} We define the set-valued mappings $F, G: D \times D\rightrightarrows X$ by
	$$F(x, y)=\{t\in X: (t+x, y)\in R_1\}, $$
	$$G(x, z)=\{t\in X: (t+x, z)\in R_2\},\ \mbox{for all}\ (x, z, y)\in D \times D\times D.$$
	Hence, all conditions of Theorem \ref{theo4.4} are satisfied. Apply Theorem \ref{theo4.4}, there exists $\bar x\in M$ such that $\theta\in F\left(\bar x, y\right)$ for all $y \in D$. This implies
	$$(\bar x, y) \in R_1\ \mbox{for all}\ y\in D.$$ 
\end{proof}

\subsection{Application to vector equilibrium problems}

\begin{corollary}\label{cor4.1} Let $D$ be a convex subset of a topological vector space, $K$ be a nonempty set, $C$ be a cone in a topological vector space $Y$. Let $f: D \times K \rightrightarrows Y, g: D \times D \rightrightarrows Y$ be two set-valued mappings satisfying the following conditions:
	
	{\rm(i)} for each $y \in K$, the set $\{x \in D: f(x, y) \subseteq C\}$ is compactly closed;
	
	{\rm(ii)} for any nonempty finite subset $A$ of $D$ and all $z \in\con(A)$, there exists $x \in A$ such that $g(x, z) \subseteq -C$;
	
	{\rm(iii)} for each $y \in K$, there exists $z \in D$ such that
	$$x \in D, g(z, x) \subseteq -C \Longrightarrow f(x, y) \subseteq C;$$
	
	{\rm(iv)} (Coercivity condition) there exists a nonempty compact subset $M$ of $D$ and a finite subset $L$ of $K$ such that for each $x\in D\bck M$, there exists $y\in L$ satisfied $f\left(x, y_{0}\right) \not\subseteq C$.
	
	Then, there exists $\bar x \in M$ such that $f\left(\bar x, y\right) \subseteq C$ for all $y \in K$.
\end{corollary}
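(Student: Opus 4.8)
The plan is to deduce Corollary~\ref{cor4.1} from Theorem~\ref{theo4.1} by choosing the pair of bifunctions appropriately. Specifically, I would define the set-valued mappings $F: D\times K\rightrightarrows Y$ and $G: D\times D\rightrightarrows Y$ by
$$F(x, y)=\{t\in Y: t+f(x, y)\subseteq C\},\qquad G(x, z)=\{t\in Y: t+g(x, z)\subseteq -C\},$$
for all $(x, z, y)\in D\times D\times K$, where $t+f(x,y)$ denotes the set $\{t+s: s\in f(x,y)\}$ (and an inclusion is understood to hold vacuously whenever the set in question is empty). The reason for these choices is the two evident equivalences
$$\theta\in F(x, y)\iff f(x, y)\subseteq C,\qquad \theta\in G(x, z)\iff g(x, z)\subseteq -C,$$
obtained by setting $t=\theta$; these let one pass freely between the hypotheses of the corollary and those of Theorem~\ref{theo4.1}.

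With this dictionary, I would verify that conditions {\rm(i)}--{\rm(iv)} of Corollary~\ref{cor4.1} are precisely conditions {\rm(i)}--{\rm(iv)} of Theorem~\ref{theo4.1} for the maps $F$ and $G$. Condition {\rm(i)}: the set $\{x\in D: \theta\in F(x, y)\}$ coincides with $\{x\in D: f(x, y)\subseteq C\}$, hence is compactly closed. Condition {\rm(iii)}: for each $y$ the implication $\theta\in G(z, x)\Rightarrow\theta\in F(x, y)$ is, by the equivalences above, exactly the implication $g(z, x)\subseteq -C\Rightarrow f(x, y)\subseteq C$. Condition {\rm(iv)}: reading the manifest typo $f(x, y_0)\not\subseteq C$ as $f(x, y)\not\subseteq C$, the coercivity condition becomes $\theta\notin F(x, y)$ for some $y\in L$. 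The only point deserving a word is condition {\rm(ii)}: since $D$ is convex and $x_1,\dots,x_n\in D$, one has $\con\{x_1,\dots,x_n\}\cap D=\con\{x_1,\dots,x_n\}$, so the requirement that $G$ be a KKM-type mapping on $D$ --- for every $\{x_1,\dots,x_n\}\subset D$ and every $x\in\con\{x_1,\dots,x_n\}\cap D$ there is $j$ with $\theta\in G(x_j, x)$ --- is, via the second equivalence, exactly condition {\rm(ii)} of the corollary with $A=\{x_1,\dots,x_n\}$ and $z=x$.

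Once all hypotheses of Theorem~\ref{theo4.1} are checked, I would apply that theorem to obtain $\bar x\in M$ with $\theta\in F(\bar x, y)$ for all $y\in K$, and then use the first equivalence to conclude $f(\bar x, y)\subseteq C$ for all $y\in K$, which is the claim. I do not expect a real obstacle: the mathematical substance is entirely contained in Theorem~\ref{theo4.1}, and the argument is just the bookkeeping of the two equivalences. The only places to stay alert are the vacuous treatment of empty values of $f$ and $g$ in the inclusions involving $\pm C$ (which causes no trouble, and in particular $G$ being possibly empty-valued is harmless for Theorem~\ref{theo4.1}), and the use of convexity of $D$ to identify $\con\{x_1,\dots,x_n\}\cap D$ with $\con\{x_1,\dots,x_n\}$ when translating condition {\rm(ii)}.
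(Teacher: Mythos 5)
Your proposal is correct and takes exactly the same route as the paper: the paper's proof defines the very same $F(x,y)=\{t\in Y: t+f(x,y)\subseteq C\}$ and $G(x,z)=\{t\in Y: t+g(x,z)\subseteq -C\}$ and invokes Theorem~\ref{theo4.1}. You merely spell out the verification of the hypotheses (via the equivalences $\theta\in F(x,y)\iff f(x,y)\subseteq C$ and $\theta\in G(x,z)\iff g(x,z)\subseteq -C$) more carefully than the paper does, which leaves those checks implicit.
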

\begin{proof} We define the set-valued mappings $F: D \times K\rightrightarrows Y, G: D \times D\rightrightarrows Y$ by
	$$F(x, y)=\{t\in Y: t+f(x, y)\subseteq C\}, $$
	$$G(x, z)=\{t\in Y: t+g(x, y)\subseteq -C\},\ \mbox{for all}\ (x, z, y)\in D \times D\times K.$$
Hence, all conditions of Theorem \ref{theo4.1} are satisfied. Applying Theorem \ref{theo4.1}, there exists $\bar x\in D$ such that 
	$$\theta\in F(\bar x, y)\ \mbox{for all}\ y\in K.$$
	This implies
$$f\left(\bar x, y\right) \subseteq C\ \mbox{for all}\ y\in K.$$
\end{proof}

\begin{corollary}\label{cor4.2} Let $D$ be a convex subset of a topological vector space, $K$ be a nonempty set, $C$ be a cone with nonempty interior in a topological vector space $Y$. Let $f: D \times K \rightrightarrows Y, g: D \times D \rightrightarrows Y$ be two set-valued mappings satisfying the following conditions:
	
	{\rm(i)} for each $y \in K$, the set $\{x \in D: f(x, y) \not\subseteq -\operatorname{int} C\}$ is compactly closed;
	
	{\rm(ii)} for any nonempty finite subset $A$ of $D$ and all $z \in \con(A)$, there exists $x \in A$ such that $g(x, z) \subseteq -C$;
	
	{\rm(iii)} for each $y \in K$, there exists $z \in D$ such that
	$$x \in D, g(z, x) \subseteq -C \Longrightarrow f(x, y) \not\subseteq -\operatorname{int} C;$$
	
	{\rm(iv)} (Coercivity condition) there exists a nonempty compact subset $M$ of $D$ and a finite subset $L$ of $K$ such that for each $x\in D\bck M$, there exists $y\in L$ satisfied $f\left(x, y\right) \subseteq -\operatorname{int} C$.
	
	Then, there exists $\bar x \in M$ such that $f\left(\bar x, y\right) \not\subseteq -\operatorname{int} C$ for all $y \in K$.
\end{corollary}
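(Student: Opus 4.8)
The plan is to obtain this corollary from Theorem~\ref{theo4.1} by a suitable choice of the set-valued bifunctions, following the same scheme as in the proof of Corollary~\ref{cor4.1}. The only genuinely new point is that the target condition ``$f(\bar x,y)\not\subseteq -\operatorname{int} C$'' is a \emph{non-inclusion}, so the defining relation of $F$ must be chosen so that ``$\theta\in F(x,y)$'' captures exactly this non-inclusion rather than the stronger disjointness condition.

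First I would define $F:D\times K\rightrightarrows Y$ and $G:D\times D\rightrightarrows Y$ by
$$F(x,y)=\{t\in Y:\ t+f(x,y)\not\subseteq -\operatorname{int} C\},\qquad G(x,z)=\{t\in Y:\ t+g(x,z)\subseteq -C\},$$
for all $(x,z,y)\in D\times D\times K$. Taking $t=\theta$ yields the two basic equivalences
$$\theta\in F(x,y)\iff f(x,y)\not\subseteq -\operatorname{int} C,\qquad \theta\in G(x,z)\iff g(x,z)\subseteq -C.$$

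Next I would check that {\rm(i)}--{\rm(iv)} of the corollary are exactly {\rm(i)}--{\rm(iv)} of Theorem~\ref{theo4.1} for this pair $(F,G)$: the set $\{x\in D:\theta\in F(x,y)\}$ equals $\{x\in D: f(x,y)\not\subseteq -\operatorname{int} C\}$, so {\rm(i)} transfers verbatim; ``$G$ is a KKM-type mapping on $D$'' means that for any finite $\{x_1,\dots,x_n\}\subseteq D$ and any $x\in\con\{x_1,\dots,x_n\}\cap D$ there is $j$ with $g(x_j,x)\subseteq -C$, which is {\rm(ii)} applied with $A=\{x_1,\dots,x_n\}$; {\rm(iii)} reads $x\in D,\ g(z,x)\subseteq -C\Rightarrow f(x,y)\not\subseteq -\operatorname{int} C$, i.e. {\rm(iii)}; and the coercivity hypothesis becomes the existence of a compact $M\subseteq D$ and a finite $L\subseteq K$ such that for each $x\in D\bck M$ some $y\in L$ satisfies $\theta\notin F(x,y)$, that is $f(x,y)\subseteq -\operatorname{int} C$, which is {\rm(iv)}. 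Theorem~\ref{theo4.1} then provides $\bar x\in M$ with $\theta\in F(\bar x,y)$ for all $y\in K$, i.e. $f(\bar x,y)\not\subseteq -\operatorname{int} C$ for all $y\in K$, which is the assertion.

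I do not expect any real obstacle: once the bifunctions are set up correctly the proof is a purely formal translation through Theorem~\ref{theo4.1}. The single delicate point is the asymmetry between ``$\subseteq$'' in the definition of $G$ and ``$\not\subseteq$'' in that of $F$: one must encode $F$ via $t+f(x,y)\not\subseteq -\operatorname{int} C$, and \emph{not} via an inclusion such as $t+f(x,y)\subseteq Y\bck(-\operatorname{int} C)$, since the latter would say ``$t+w\notin -\operatorname{int} C$ for all $w\in f(x,y)$'', making $\theta\in F(x,y)$ equivalent to $f(x,y)\cap(-\operatorname{int} C)=\emptyset$ instead of to $f(x,y)\not\subseteq -\operatorname{int} C$. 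The nonempty-interior assumption on $C$ is not actually used in the reduction; it only guarantees $-\operatorname{int} C\neq\emptyset$, so that the condition $f(\bar x,y)\not\subseteq -\operatorname{int} C$ is non-vacuous.
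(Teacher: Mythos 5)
Your proposal is correct and follows exactly the paper's own route: define $F(x,y)=\{t\in Y: t+f(x,y)\not\subseteq -\operatorname{int} C\}$ and $G(x,z)=\{t\in Y: t+g(x,z)\subseteq -C\}$ and apply Theorem~\ref{theo4.1}. Your explicit verification of hypotheses (i)--(iv) and your remark on why $F$ must be encoded via the non-inclusion are in fact more careful than the paper's one-line reduction.
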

\begin{proof} We define the set-valued mappings $F: D \times K\rightrightarrows Y, G: D \times D\rightrightarrows Y$ by
	$$F(x, y)=\{t\in Y: t+f(x, y)\not\subseteq -\operatorname{int} C\}, $$
	$$G(x, z)=\{t\in Y: t+g(x, y)\subseteq -C\},\ \mbox{for all}\ (x, z, y)\in D \times D\times K.$$
Hence, all conditions of Theorem \ref{theo4.1} are satisfied. Applying Theorem \ref{theo4.1}, there exists $\bar x\in D$ such that 
	$$\theta\in F(\bar x, y)\ \mbox{for all}\ y\in K.$$
	This implies
	$$f\left(\bar x, y\right) \not\subseteq -\operatorname{int} C\ \mbox{for all}\ y\in K.$$
\end{proof}
\noindent{\bf Remark 6.} Corolarry \ref{cor4.2} is a new result for the existence of solutions to the vector equilibrium problem. Observe that our result is different from those obtained in \cite{BL3}. Namely, the authors consider the vector equilibrium problem with the objective function determined on the Cartesian product of two equal sets, while we think this problem with the objective function determined on the Cartesian product of two different sets, where one set is arbitrarily non-empty. Furthermore, in Theorem 7 of \cite{BL3},  the authors used the assumption that $X$ is Hausdorff topological vector space and $C$ be a closed convex cone in a locally convex Hausdorff topological vector space, while our results use the topological vector spaces and $C$ does not need convexity and closure. Example 7 below shows that sometimes our Corollary \ref{cor4.2} is useful, while Theorem 7 of \cite{BL3} is not.

\noindent{\bf Example 7.} Let $K$ be a nonempty of the interval $(-3, 0)$ that contain the point $-\frac{5}{2}$ and $X = Y=\mathbb R, D = [0,1)$ and $C=\mathbb R_+$. Consider the mappings $f: D\times K \rightrightarrows Y$ and $g: D\times D \rightrightarrows Y$ by
$$f(x,y)=\left\{\begin{array}{l}
	(-\infty, xy],\ \mbox{if}\ y\in K, y< -2,\\
	{[x+y, x+y+2]}, \ \mbox{if}\ y\in K, y\geq-2,\\
\end{array}\right.$$
$$g(x, z) = (-\infty, x-z],$$
for all $(x, z, y)\in D\times D\times K.$

Obviously, the first part of condition {\rm(i)} of Corollary \ref{cor4.2} is satisfied, since the set 
$$\{x\in D: f(x, y)\not\subseteq -\operatorname{int} C\}=\left\{\begin{array}{l}
	\{0\},\ \mbox{if}\ y\in K, y<-2,\\
	{D}, \ \mbox{if}\ y\in K, y\geq-2,\\
\end{array}\right.$$
is compactly closed for all $y\in K$. 

For each finite subset $\{x_1, x_2, ..., x_n\}$ of $D$ and for any $x\in \con\{x_1, x_2, ..., x_n\}$, we put $x_j=\min_{1\leq i\leq n} x_i$. Then $x_j-x\leq 0$. Which implies that $g(x_j, x) \subseteq -C$. Hence, the condition {\rm(ii)} of Corollary \ref{cor4.2} is satisfied.

Next, for each $y\in K$, there exists $z=0\in D$ such that if $g(z, x)\subseteq -C$, then $z-x=-x\leq 0$. Hence, $x=0$. On the other hand, we have
$$f(0, y)=\left\{\begin{array}{l}
	(-\infty, 0],\ \mbox{if}\ y\in K, y<-2,\\
	{[y, y+2]}, \ \mbox{if}\ y\in K, y\geq-2.\\
\end{array}\right.$$
This implies that $f(0, y)\not\subseteq -\operatorname{int} C$, for all $y\in K.$  Therefore, the condition {\rm(iii)} of Corollary \ref{cor4.2} is satisfied.

Moreover, for $M=[0, \frac{1}{2}]$ is a nonempty compact subset of $D$ and a finite subset $L=\{-\frac{5}{2}\}$ of $K$, we have $f(x, -\frac{5}{2})=(-\infty, -\frac{5}{2}x]\subseteq -\operatorname{int} C$ for all $x\in D\bck M=(\frac{1}{2}, 1)$. Thus, the condition {\rm(iv)} of Corollary \ref{cor4.2} is satisfied.

Then all conditions of Corollary \ref{cor4.2} are satisfied, and $\bar x=0$ is a solution of vector equilibrium problem. By $D\not=K$, then Theorem 7 of \cite{BL3} is not applicable.

\begin{corollary}\label{cor4.5} Assume that $D$ is a nonempty convex subset of a Hausdorff locally convex vector topological space $X$, $Y$ be a topological vector space, $C$ be a cone in a topological vector space $Y$, $N\subseteq D$ be a self-segment-dense set. Let $f, g: D \times D \rightrightarrows Y$ be two set-valued mappings satisfying the following conditions:
	
	{\rm(i)} for each $z \in N$, the set $\{x \in D: f(x, z) \subseteq C\}$ is compactly closed;
	
	{\rm(ii)} for each $x \in D$, the set $\{z \in D\bck N: f(x, z) \not\subseteq C\}$ is open in $D$; 
		
	{\rm(iii)} for any nonempty finite subset $A$ of $N$ and all $z \in \con(A)\cap N$, there exists $x \in A$ such that $g(x, z) \subseteq -C$;
	
	{\rm(iv)} for each $z \in N$, there exists $z' \in D$ such that
	$$x \in D, g(z', x) \subseteq -C \Longrightarrow f(x, z) \subseteq C;$$	
	
	{\rm(v)} (Coercivity condition) there exists a nonempty compact subset $M$ of $D$ and a point $z_0\in D$ such that $f(x, z_0)\not\subseteq C$ for all $x\in D\bck M$.\\		
	Then, there exists $\bar x \in M$ such that $f\left(\bar x, x\right) \subseteq C$ for all $x \in D$.
\end{corollary}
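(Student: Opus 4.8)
The plan is to obtain Corollary \ref{cor4.5} as a direct consequence of Theorem \ref{theo4.4}, after rewriting the set inclusions $f(\cdot,\cdot)\subseteq C$ and $g(\cdot,\cdot)\subseteq -C$ as membership statements for the origin $\theta$. Concretely, I would introduce the set-valued mappings $F, G: D\times D\rightrightarrows Y$ defined by
$$F(x, y)=\{t\in Y:\ t+f(x, y)\subseteq C\},\qquad G(x, z)=\{t\in Y:\ t+g(x, z)\subseteq -C\}$$
for all $(x, z, y)\in D\times D\times D$. The elementary but crucial remark is that $\theta\in F(x, y)$ if and only if $f(x, y)\subseteq C$, and $\theta\in G(x, z)$ if and only if $g(x, z)\subseteq -C$. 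In particular the conclusion $\theta\in F(\bar x, x)$ for all $x\in D$ that Theorem \ref{theo4.4} would deliver is precisely the desired statement $f(\bar x, x)\subseteq C$ for all $x\in D$.

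Next I would check that $F$ and $G$ inherit conditions {\rm(i)}--{\rm(v)} of Theorem \ref{theo4.4} from the hypotheses {\rm(i)}--{\rm(v)} of the corollary. For {\rm(i)}: for each $z\in N$ the set $\{x\in D:\ \theta\in F(x, z)\}$ equals $\{x\in D:\ f(x, z)\subseteq C\}$, which is compactly closed. For {\rm(ii)}: for each $x\in D$ the set $\{z\in D\bck N:\ \theta\not\in F(x, z)\}$ equals $\{z\in D\bck N:\ f(x, z)\not\subseteq C\}$, which is open in $D$. For {\rm(iii)}: given $\{x_1,\dots,x_n\}\subset N$ and $x\in\con\{x_1,\dots,x_n\}\cap N$, hypothesis {\rm(iii)} with $A=\{x_1,\dots,x_n\}$ yields an index $j$ with $g(x_j, x)\subseteq -C$, i.e.\ $\theta\in G(x_j, x)$, so $G$ is a KKM-type mapping on $N$. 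For {\rm(iv)}: for $z\in N$ take the $z'\in D$ provided by hypothesis {\rm(iv)}; if $\theta\in G(z', x)$ then $g(z', x)\subseteq -C$, hence $f(x, z)\subseteq C$, i.e.\ $\theta\in F(x, z)$. For {\rm(v)}: with the compact $M\subseteq D$ and $z_0\in D$ from hypothesis {\rm(v)}, every $x\in D\bck M$ satisfies $f(x, z_0)\not\subseteq C$, i.e.\ $\theta\not\in F(x, z_0)$.

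Once all five conditions are verified, Theorem \ref{theo4.4} provides $\bar x\in M$ with $\theta\in F(\bar x, x)$ for every $x\in D$; translating back, $f(\bar x, x)\subseteq C$ for all $x\in D$, which finishes the proof. I do not anticipate any genuine difficulty: the argument is a dictionary translation between the cone-inclusion formulation and the $\theta$-membership formulation, and the only step meriting a moment's attention is the verification of the KKM-type property for $G$ on $N$, where one must make sure the quantifier ``$z\in\con(A)\cap N$'' in hypothesis {\rm(iii)} lines up exactly with the definition of a KKM-type mapping on the set $N$.
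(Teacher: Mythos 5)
Your proposal is correct and follows exactly the paper's own route: it defines the same auxiliary maps $F(x,y)=\{t\in Y: t+f(x,y)\subseteq C\}$ and $G(x,z)=\{t\in Y: t+g(x,z)\subseteq -C\}$ and invokes Theorem \ref{theo4.4}. The only difference is that you spell out the verification of conditions {\rm(i)}--{\rm(v)}, which the paper leaves implicit.
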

\begin{proof} We define the set-valued mappings $F, G: D \times D\rightrightarrows Y$ by
	$$F(x, z)=\{t\in Y: t+f(x, z)\subseteq C\}, $$
	$$G(x, z)=\{t\in Y: t+g(x, z)\subseteq -C\},\ \mbox{for all}\ (x, z)\in D \times D.$$
	Hence, all conditions of Theorem \ref{theo4.4} are satisfied. Applying Theorem \ref{theo4.4}, there exists $\bar x\in M$ such that 
	$$\theta\in F(\bar x, y)\ \mbox{for all}\ y\in D.$$
	This implies
	$$f\left(\bar x, y\right) \subseteq C\ \mbox{for all}\ y\in D.$$
\end{proof}

\begin{corollary}\label{cor4.6} Assume that $D$ is a nonempty convex subset of a Hausdorff locally convex vector topological space $X$, $Y$ be a topological vector space, $C$ be a cone with nonempty interior in a topological vector space $Y$, $N\subseteq D$ be a self-segment-dense set. Let $f, g: D \times D \rightrightarrows Y$ be two set-valued mappings satisfying the following conditions:
	
	{\rm(i)} for each $z \in N$, the set $\{x \in D: f(x, z) \not\subseteq -\operatorname{int} C\}$ is compactly closed;
	
	{\rm(ii)} for each $x \in D$, the set $\{z \in D\bck N: f(x, z) \subseteq -\operatorname{int} C\}$ is open in $D$;
	
	{\rm(iii)} for any nonempty finite subset $A$ of $N$ and all $z \in \con(A)\cap N$, there exists $x \in A$ such that $g(x, z) \subseteq -C$;
	
	{\rm(iv)} for each $z \in N$, there exists $z'\in D$ such that
	$$	x \in D, g(z, x) \subseteq -C \Longrightarrow f(x, y) \not\subseteq -\operatorname{int} C.$$
	
 {\rm(v)} (Coercivity condition) there exists a nonempty compact subset $M$ of $D$ and a point $z_0\in D$ such that $f(x, z_0)\not\subseteq C$ for all $x\in D\bck M$.\\		
Then, there exists $\bar x \in M$ such that $f\left(\bar x, x\right) \not\subseteq -\operatorname{int} C$ for all $x \in D$.
\end{corollary}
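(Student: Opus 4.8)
The plan is to obtain Corollary \ref{cor4.6} as an immediate specialization of Theorem \ref{theo4.4}, exactly in the spirit of the proof of Corollary \ref{cor4.5}, by encoding the vector data $(f,g)$ into set-valued maps $(F,G)$ of inclusion type. Concretely, I would set, for all $(x,z)\in D\times D$,
$$F(x,z)=\{t\in Y: t+f(x,z)\not\subseteq -\operatorname{int} C\},\qquad G(x,z)=\{t\in Y: t+g(x,z)\subseteq -C\}.$$
The whole argument then rests on the elementary dictionary $\theta\in F(x,z)\iff f(x,z)\not\subseteq -\operatorname{int} C$ and $\theta\in G(x,z)\iff g(x,z)\subseteq -C$ (together with the negated versions).

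Under this dictionary I would check, one hypothesis at a time, that the assumptions of the corollary are precisely the assumptions of Theorem \ref{theo4.4}. Indeed: (i) $\{x\in D:\theta\in F(x,z)\}=\{x\in D: f(x,z)\not\subseteq-\operatorname{int} C\}$, which is compactly closed for each $z\in N$ by (i); (ii) $\{z\in D\setminus N:\theta\notin F(x,z)\}=\{z\in D\setminus N: f(x,z)\subseteq-\operatorname{int} C\}$, which is open in $D$ by (ii); (iii) the KKM-type property of $G$ on $N$ unwinds, via the dictionary for $G$, to: for every finite $\{x_1,\dots,x_n\}\subseteq N$ and every $x\in\con\{x_1,\dots,x_n\}\cap N$ there is an index $j$ with $g(x_j,x)\subseteq -C$, which is (iii); (iv) the implication $\theta\in G(z',x)\Rightarrow\theta\in F(x,z)$ reads $g(z',x)\subseteq -C\Rightarrow f(x,z)\not\subseteq-\operatorname{int} C$, which is (iv); and the coercivity condition (v) of Theorem \ref{theo4.4}, namely $\theta\notin F(x,z_0)$ for all $x\in D\setminus M$, is the requirement $f(x,z_0)\subseteq-\operatorname{int} C$ on $D\setminus M$. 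Having verified all five hypotheses, I would invoke Theorem \ref{theo4.4} to produce $\bar x\in M$ with $\theta\in F(\bar x,y)$ for every $y\in D$, and read this back through the dictionary as $f(\bar x,y)\not\subseteq-\operatorname{int} C$ for all $y\in D$, which is exactly the assertion of the corollary.

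The verification that each hypothesis survives the translation is routine; the only place where some care is genuinely required is the coercivity hypothesis. Since $\theta\notin F(x,z_0)$ holds iff $f(x,z_0)\subseteq-\operatorname{int} C$, the condition on $f$ at the distinguished point $z_0$ that actually feeds Theorem \ref{theo4.4} must be stated as $f(x,z_0)\subseteq-\operatorname{int} C$ for $x\in D\setminus M$, so I would phrase (v) in that form rather than as a condition involving $C$ alone. I would also recall that the proof of Theorem \ref{theo4.4} begins by deducing $z_0\in N$ from (ii) and the coercivity condition, so no additional hypothesis pinning down the location of $z_0$ is needed. No genuine new difficulty arises beyond these bookkeeping points.
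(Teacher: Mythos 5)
Your proposal is correct and follows exactly the paper's own route: it defines $F(x,z)=\{t\in Y: t+f(x,z)\not\subseteq -\operatorname{int} C\}$ and $G(x,z)=\{t\in Y: t+g(x,z)\subseteq -C\}$ and invokes Theorem \ref{theo4.4}. Your observation that the coercivity hypothesis (v) as printed ($f(x,z_0)\not\subseteq C$) does not translate to $\theta\notin F(x,z_0)$ under this dictionary, and must instead be read as $f(x,z_0)\subseteq -\operatorname{int} C$ for all $x\in D\setminus M$, is a genuine and needed correction that the paper's one-line proof silently glosses over.
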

\begin{proof} We define the set-valued mappings $F, G: D \times D\rightrightarrows Y$ by
	$$F(x, y)=\{t\in Y: t+f(x, y)\not\subseteq -\operatorname{int} C\}, $$
	$$G(x, z)=\{t\in Y: t+g(x, y)\subseteq -C\},\ \mbox{for all}\ (x, z, y)\in D \times D\times D.$$
	Hence, all conditions of Theorem \ref{theo4.4} are satisfied. Applying Theorem \ref{theo4.4}, there exists $\bar x\in M$ such that 
	$$\theta\in F(\bar x, y)\ \mbox{for all}\ y\in D.$$
	This implies
	$$f\left(\bar x, y\right) \not\subseteq -\operatorname{int} C\ \mbox{for all}\ y\in D.$$
\end{proof}

\subsection{Application to common fixed point theorems}

\begin{corollary}\label{cor5.1} Assume that $D$ is a nonempty convex subset of a vector topological space $X$ and $K$ be a nonempty set. Let $T: D \times K \rightrightarrows X, Q: D \times D \rightrightarrows X$ be two set-valued mappings satisfying the following conditions:
	
	{\rm(i)} for each $y \in K$, the set $\{x \in D: x\in T(x, y)\}$ is compactly closed;
	
	{\rm(ii)} for any $\{x_1, x_2, ...,x_n\}\subset D$ and for any $x\in \con\{x_1,, x_2, ...,x_n\}$, there is an index $j\in \{1, 2, ..., n\}$ such that $x\in Q(x_j, x)$;
	
	{\rm(iii)} for each $y \in K$, there exists $z \in D$ such that
	$$x \in D, z \in Q(z, x) \Longrightarrow x \in T(x, y);$$
	
	{\rm(iv)} there exists a nonempty compact subset $M$ of $D$ and a finite subset $L$ of $K$ such that for each $x\in D\bck M$, there exists $y\in L$ satisfied $x\not\in T(x, y)$.\\
	Then, there exists $\bar x\in M$ such that $\bar x \in T\left(\bar x, y\right)$ for all $y \in K$.
\end{corollary}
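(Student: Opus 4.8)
The plan is to reduce Corollary \ref{cor5.1} to Theorem \ref{theo4.1} via the standard substitution that converts fixed‑point inclusions into equilibrium inclusions at the origin. Concretely, I would introduce two set‑valued mappings $F: D \times K \rightrightarrows X$ and $G: D \times D \rightrightarrows X$ defined by
$$F(x, y) = x - T(x, y), \qquad G(x, z) = x - Q(x, z),$$
for all $(x, z, y) \in D \times D \times K$. With this choice one has $\theta \in F(x, y) \iff x \in T(x, y)$ and $\theta \in G(z, x) \iff z \in Q(z, x)$, so each hypothesis of Corollary \ref{cor5.1} should translate directly into the corresponding hypothesis of Theorem \ref{theo4.1}.

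Next I would verify the four conditions of Theorem \ref{theo4.1} one at a time. For (i): $\{x \in D: \theta \in F(x, y)\} = \{x \in D: x \in T(x, y)\}$, which is compactly closed by hypothesis (i) of the corollary. For (ii): since $D$ is convex and $x_1, \dots, x_n \in D$, one has $\con\{x_1, \dots, x_n\} \cap D = \con\{x_1, \dots, x_n\}$, and hypothesis (ii) says exactly that for any such finite subset and any $x \in \con\{x_1, \dots, x_n\}$ there is $j$ with $x \in Q(x_j, x)$, i.e. $\theta \in G(x_j, x)$, so $G$ is a KKM‑type mapping on $D$. For (iii): given $y \in K$, the point $z \in D$ furnished by hypothesis (iii) satisfies $x \in D,\ \theta \in G(z, x) \Rightarrow \theta \in F(x, y)$. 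For (iv): $\theta \notin F(x, y) \iff x \notin T(x, y)$, so the coercivity hypothesis (iv) of the corollary is precisely the coercivity condition of Theorem \ref{theo4.1} with the same compact set $M$ and finite set $L$.

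Having checked all hypotheses, I would apply Theorem \ref{theo4.1} to obtain $\bar x \in M$ with $\theta \in F(\bar x, y)$ for all $y \in K$, which by the definition of $F$ means $\bar x \in T(\bar x, y)$ for all $y \in K$, i.e. $\bar x$ is the desired common fixed point.

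I do not expect any genuine obstacle here: the statement is a direct specialization of Theorem \ref{theo4.1}, in the same spirit as Corollaries \ref{cor3.1}, \ref{cor4.1} and \ref{cor4.2}. The only points requiring a little care are the bookkeeping in translating the KKM‑type condition (matching the order of the arguments of $Q$ with those of $G$) and confirming that the coercivity data $M$ and $L$ pass over unchanged; both are routine.
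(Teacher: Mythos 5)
Your overall strategy is exactly the paper's: set $F(x,y)=x-T(x,y)$, $G(x,z)=x-Q(x,z)$ and invoke Theorem \ref{theo4.1}. The translations of (i), (iii) and (iv) are fine. The problem is your verification of condition (ii). With $G(x,z)=x-Q(x,z)$ one has $G(x_j,x)=x_j-Q(x_j,x)$, hence
$$\theta\in G(x_j,x)\iff x_j\in Q(x_j,x),$$
not $x\in Q(x_j,x)$ as you claim; indeed your own stated equivalence $\theta\in G(z,x)\iff z\in Q(z,x)$, specialized to $z=x_j$, already says so. Hypothesis (ii) of the corollary supplies only $x\in Q(x_j,x)$, which is a different assertion, so the KKM-type property of $G$ does not follow and Theorem \ref{theo4.1} cannot be applied as written. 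Redefining $G$ does not help: $G(x,z)=z-Q(x,z)$ would repair (ii) but would turn the premise of condition (iii) of the theorem into $x\in Q(z,x)$ instead of $z\in Q(z,x)$, breaking the translation of hypothesis (iii).

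The gap is not merely notational: with hypothesis (ii) read literally the corollary is false. Take $X=\mathbb R$, $D=M=[0,1]$, $K=L=\{1,2\}$, $T(x,1)=\{0\}$, $T(x,2)=\{1\}$ and $Q(x,z)=\{z\}$. Then (i) holds (the fixed-point sets are $\{0\}$ and $\{1\}$), (ii) holds trivially because $x\in Q(x_j,x)=\{x\}$ for every $j$, (iii) holds (for $y=1$ take $z=0$ and for $y=2$ take $z=1$, since $z\in Q(z,x)=\{x\}$ forces $x=z$), and (iv) is vacuous because $D\setminus M=\emptyset$; yet $T(\cdot,1)$ and $T(\cdot,2)$ have no common fixed point. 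For the reduction to work, (ii) must be strengthened to produce an index $j$ with $x_j\in Q(x_j,x)$. The paper's own proof is the one-line assertion that ``all conditions of Theorem \ref{theo4.1} are satisfied'' and so conceals the same defect; you correctly identified the argument-order bookkeeping as the delicate point, but then resolved it incorrectly.
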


\begin{proof} 
We define the set-valued mappings $F: D \times K\rightrightarrows X, G: D \times D\rightrightarrows X$ by
$$F(x, y)=x-T(x, y),G(x, z)=x-Q(x, z),\ \mbox{for all}\ (x, z, y)\in D \times D\times K.$$
Hence, all conditions of Theorem \ref{theo4.1} are satisfied. Applying Theorem \ref{theo4.1}, there exists $\bar x\in D$ such that 
$$\theta\in F(\bar x, y)\ \mbox{for all}\ y\in K.$$
This implies
$$\bar x \in T\left(\bar x, y\right)\ \mbox{for all}\ y\in K.$$	
\end{proof}
\noindent{\bf Remark 7.} Corolarry \ref{cor5.1} is a new result for the existence of solutions to the common fixed point problem. Observe that our result is different from those obtained in (\cite{ABRe}, \cite{BLa4}, \cite{LiChYu}). Namely, in Theorem 1 of \cite{BLa4} and Theorem 3.2 of \cite{LiChYu},  the authors used the assumption that the mapping $T$ is compact, but our results do not need; in Theorem 4.1- Theorem 4.5 of \cite{ABRe}, the authors use the assumption that $D$ is a closed set, while our results do not. Example 8 below shows that sometimes our Corollary \ref{cor5.1} is valid, while Theorem 1 of \cite{BLa4}, Theorem 3.2 of \cite{LiChYu}, Theorem 4.1- Theorem 4.5 of \cite{ABRe} are not.

\noindent{\bf Example 8.} Let $K$ be a nonempty of the interval $(-\infty, 0)$ that contain the point $-1$ and $X = \mathbb R, D = [0, 2)$. Consider the mappings $T: D\times K \rightrightarrows X$ and $Q: D\times D \rightrightarrows X$ by
$$T(x,y)=\left\{\begin{array}{l}
	(-\infty,1-x+xy],\ \mbox{if}\ y\in K, y< -1,\\
	{[2x+y, 3x+y+2]}, \ \mbox{if}\ y\in K, y\geq-1,\\
\end{array}\right.$$
$$Q(x, z) = (-\infty, 2x-z],$$
for all $(x, z, y)\in D\times D\times K.$

The condition {\rm(i)} of Corollary \ref{cor5.1} is satisfied, since 
$$\{x\in D: x\in T(x, y)\}=\left\{\begin{array}{l}
	[0, \frac{1}{2-y}],\ \mbox{if}\ y\in K, y<-1,\\
	{[-\frac{y}{2}-1, -y]}\cap D, \ \mbox{if}\ y\in K, y\geq-1.\\
\end{array}\right.$$

For each finite subset $\{x_1, x_2, ..., x_n\}$ of $D$ and for any $x\in \con\{x_1, x_2, ..., x_n\}$, we put $x_j=\max_{1\leq i\leq n} x_i$. Then $x_j \geq x$. This implies that $x\in Q(x_j, x)$. Hence, the condition {\rm(ii)} of Corollary \ref{cor5.1} is satisfied.

Next, for each $y\in K$, there exists $z=0\in D$ such that if $0\in Q(0, x)$, then $$z-x=-x\geq 0.$$
 Hence, $x=0$. On the other hand, we have
$$T(0,y)=\left\{\begin{array}{l}
	(-\infty,1],\ \mbox{if}\ y\in K, y< -1,\\
	{[y, y+2]}, \ \mbox{if}\ y\in K, y\geq-1.\\
\end{array}\right.$$
This implies that $0\in T(0, y)$ for all $y\in K.$  Therefore, the condition {\rm(iii)} of Corollary \ref{cor5.1} is satisfied.

Moreover, for $M=[0, 1]$ is a nonempty compact subset of $D$ and a finite subset $L=\{-1\}$ of $K$, we have $x\not\in T(x, -1)=[2x-1, 3x+1]$ for all $x\in D\bck M=(1, 2)$. Thus, the condition {\rm(iv)} of Corollary \ref{cor5.1} is satisfied.

Then all conditions of Corollary \ref{cor5.1} are satisfied, and $\bar x=0$ is a common fixed point of family mappings $\{T(., y)\}_{y\in K}$. On the other hand, since $T(D\times K)$ containing the set $(-\infty,0]$, so $\overline{T(D\times K)}$ is not a compact set, hence $T$ is not compact. Therefore, Theorem 1 of \cite{BLa4} and Theorem 3.2 of \cite{LiChYu} are not applicable. Moreover, Theorem 4.1- Theorem 4.5 of \cite{ABRe} are not applicable since $D$ is not closed.

\begin{corollary}\label{cor5.2} Assume that $D$ is a nonempty convex subset of a Hausdorff locally convex vector topological space $X$, $N\subseteq D$ be a self-segment-dense set. Let $T, Q: D \times D \rightrightarrows X$ be two set-valued mappings satisfying the following conditions:
	
	{\rm(i)} for each $y \in N$, the set $\{x \in D: x\in T(x, y)\}$ is compactly closed;
	
	{\rm(ii)} for each $x \in D$, the set $\{y \in D\bck N: x\not\in T(x, y)\}$ is open in $D$;
	
	{\rm(iii)} for any $\{x_1, x_2, ...,x_n\}\subset N$ and for any $x\in \con\{x_1,, x_2, ...,x_n\}\cap N$, there is an index $j\in \{1, 2, ..., n\}$ such that $x\in Q(x_j, x)$;
	
	{\rm(iv)} for each $y \in N$, there exists $z \in D$ such that
	$$x \in D, z \in Q(z, x) \Longrightarrow x \in T(x, y);$$
	
	{\rm(v)} (Coercivity condition) there exists a nonempty compact subset $M$ of $D$ and a point $z_0\in D$ such that $x \not \in T(x, z_0)$ for all $x\in D\bck M$.\\	
	Then, there exists $\bar x\in M$ such that $\bar x \in  T\left(\bar x, y\right)$ for all $y \in D$.

\end{corollary}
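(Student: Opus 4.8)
The plan is to deduce Corollary~\ref{cor5.2} directly from Theorem~\ref{theo4.4} by recasting the common fixed point problem for the family $\{T(\cdot,y)\}_{y\in D}$ as a generalized vector equilibrium problem with range space $Y:=X$, exactly as was done for an arbitrary index set in Corollary~\ref{cor5.1}. I would introduce the set-valued maps $F,G:D\times D\rightrightarrows X$ defined by
$$F(x,y)=x-T(x,y),\qquad G(x,z)=x-Q(x,z)\qquad\text{for all }(x,y),(x,z)\in D\times D,$$
and record the elementary equivalences $\theta\in F(x,y)\Longleftrightarrow x\in T(x,y)$ and $\theta\in G(x,z)\Longleftrightarrow x\in Q(x,z)$. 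The set $D$, the self-segment-dense set $N\subseteq D$, and the compact set $M\subseteq D$ are taken unchanged from the corollary.

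Next I would verify that hypotheses (i)--(v) of Theorem~\ref{theo4.4} hold for $F,G$. For (i): $\{x\in D:\theta\in F(x,z)\}=\{x\in D:x\in T(x,z)\}$, which is compactly closed for each $z\in N$ by hypothesis~(i). For (ii): $\{z\in D\bck N:\theta\notin F(x,z)\}=\{z\in D\bck N:x\notin T(x,z)\}$, open in $D$ for each $x\in D$ by hypothesis~(ii). For (iii): rewritten through the equivalence for $G$, hypothesis~(iii) is precisely the statement that $G$ is a KKM-type map on $N$. For (iv): for each $z\in N$, the point $z'\in D$ provided by hypothesis~(iv) satisfies $\theta\in G(z',x)\Longrightarrow x\in T(x,z)\Longrightarrow\theta\in F(x,z)$. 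For (v): the point $z_0\in D$ of hypothesis~(v) gives $\theta\notin F(x,z_0)$, i.e.\ $x\notin T(x,z_0)$, for all $x\in D\bck M$, which is the coercivity condition.

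With all hypotheses checked, I would apply Theorem~\ref{theo4.4} to obtain $\bar x\in M$ with $\theta\in F(\bar x,y)$ for every $y\in D$, and then read this back through the first equivalence as $\bar x\in T(\bar x,y)$ for every $y\in D$, which is the assertion. I do not expect a substantive obstacle: once the reformulation $F=x-T$, $G=x-Q$ is fixed, the proof is a sequence of five routine rewritings, and all the analytic content — the finite-dimensional KKM argument, the finite-intersection-property argument exploiting compactness of $M$, and the use of Lemma~\ref{lem:1} for the self-segment-dense set $N$ — is already encapsulated in Theorem~\ref{theo4.4}, whose hypotheses are also what force $X$ to be taken Hausdorff and locally convex. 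The one point deserving attention is to make sure that the two roles in which $Q$ appears, in hypotheses~(iii) and~(iv), are both compatible with the single chosen definition $G(x,z)=x-Q(x,z)$; this slot-bookkeeping is easy to slip on but is the only thing requiring care.
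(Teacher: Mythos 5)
Your reduction is the same as the paper's: set $F(x,y)=x-T(x,y)$, $G(x,z)=x-Q(x,z)$ and invoke Theorem~\ref{theo4.4}. However, the point you flag at the end as ``slot-bookkeeping requiring care'' is not a routine check --- it is exactly where the argument breaks, and your verification of hypothesis (iii) is incorrect as written. With $G(x,z)=x-Q(x,z)$ one has $\theta\in G(a,b)\Longleftrightarrow a\in Q(a,b)$, so the KKM-type condition for $G$ on $N$ demands: for $x\in\con\{x_1,\dots,x_n\}\cap N$ there is $j$ with $x_j\in Q(x_j,x)$. Hypothesis (iii) of the corollary instead supplies $x\in Q(x_j,x)$; these are different memberships and neither implies the other. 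So (iii) is \emph{not} ``precisely the statement that $G$ is a KKM-type map on $N$'' for your chosen $G$, even though (i), (ii), (iv) and (v) do translate correctly under it.

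The mismatch also cannot be repaired by re-choosing $G$: taking $G(x,z)=z-Q(x,z)$ gives $\theta\in G(x_j,x)\Longleftrightarrow x\in Q(x_j,x)$, which matches (iii), but then $\theta\in G(z',x)\Longleftrightarrow x\in Q(z',x)$, whereas hypothesis (iv) only provides the implication from the premise $z'\in Q(z',x)$ --- so (iv) no longer transfers. To apply Theorem~\ref{theo4.4} one must either read (iii) as ``$x_j\in Q(x_j,x)$'' or read the premise in (iv) as ``$x\in Q(z,x)$''; as stated, the two hypotheses are calibrated to two different encodings of $Q$. In fairness, the paper's own proof is a one-line assertion that ``all conditions of Theorem~\ref{theo4.4} are satisfied'' and silently contains the identical defect (the same issue occurs in Corollary~\ref{cor5.1}, and Remark~1's claimed equivalence has the analogous slot reversal), so your proposal faithfully reproduces the paper's argument --- but the step you singled out as the only delicate one is precisely the step that does not go through without amending one of the hypotheses.
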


\begin{proof} 
	We define the set-valued mappings $F: D \times K\rightrightarrows X, G: D \times D\rightrightarrows X$ by
	$$F(x, y)=x-T(x, y),G(x, z)=x-Q(x, z),\ \mbox{for all}\ (x, z, y)\in D \times D\times K.$$
	Hence, all conditions of Theorem \ref{theo4.4} are satisfied. Applying Theorem \ref{theo4.4}, there exists $\bar x\in D$ such that 
	$$\theta\in F(\bar x, y)\ \mbox{for all}\ y\in K.$$
	This implies
	$$\bar x \in T\left(\bar x, y\right)\ \mbox{for all}\ y\in K.$$	
\end{proof}

\section{Conclusions}
  The existence theorems for ${\rm GVEP}(F, D, K)$ we present can be seen as a foundational contribution to the field of vector equilibrium problems. By establishing the existence of solutions under minimal structural assumptions on $K$ and by focusing on self-segment-dense subsets, we provide tools that can be applied to a wide range of problems in optimization. Overall, the paper advances the theory of vector equilibrium problems by providing new existence results under general conditions and by demonstrating the practical utility of these results through applications to related problem areas.

\end{document}